\documentclass[a4paper,12pt]{article}
\usepackage{mathrsfs}

\usepackage{amsfonts}
\usepackage{amscd,color}
\usepackage{amsmath,amsfonts,amssymb,amscd}
\usepackage{indentfirst,graphicx,epsfig}
\input{epsf}

\usepackage{epstopdf}
\usepackage{caption}

\usepackage[dvipsnames]{xcolor}
\usepackage[T1]{fontenc}
\usepackage[utf8]{inputenc}
\usepackage[english]{babel}
\usepackage{amsmath,amsthm,enumerate}

\usepackage[colorlinks=false, linktocpage=true]{hyperref}

\usepackage{url}
\usepackage{stmaryrd}
\usepackage{tikz,pgf}
\usepackage{subfig}
\usepackage{cancel}
\usetikzlibrary{positioning}
\usepackage{array}
\usepackage{makecell}
\usepackage{tabularx}
\usepackage{algorithm}
\usepackage{algorithmic}
\usepackage{comment}

\usetikzlibrary{decorations.markings}
\usetikzlibrary{decorations.pathmorphing}
\usetikzlibrary{patterns}
\tikzset{->-/.style={decoration={
  markings,
  mark=at position .5 with {\arrow[scale=0.8]{>}}},postaction={decorate}}}
\tikzset{snake it/.style={decorate, decoration={snake, amplitude=.4mm, segment length=2mm}}}


\setlength{\textwidth}{152mm}
\setlength{\textheight}{230mm}
\setlength{\headheight}{2cm}
\setlength{\topmargin}{0pt}
\setlength{\headsep}{0pt}
\setlength{\oddsidemargin}{0pt}
\setlength{\evensidemargin}{0pt}

\parskip=3pt

\voffset -25mm \rm

\newtheorem{thm}{Theorem}[section]
\newtheorem{df}[thm]{Definition}
\newtheorem{problem}[thm]{Problem}

\newtheorem{lem}[thm]{Lemma}

\newtheorem{cor}[thm]{Corollary}

\newcommand{\ml}{l\kern-0.55mm\char39\kern-0.3mm}

\baselineskip=20pt

\title{\textbf{Cooperative coloring of some graph families\footnote{Supported by the Natural Science Basic Research Program of Shaanxi (Nos. 2022JQ-009, 2023-JC-YB-001, 2023-JC-YB-054)
and the Fundamental Research Funds for the Central Universities (No. XJS220705).}}}
\author{Xuqing Bai, \quad Bi Li, \quad Chuandong Xu\thanks{Corresponding author.}, \quad Xin Zhang\\
{\small School of Mathematics and Statistics, Xidian University, Xi'an, 710071, China}\\
{\small $\{$baixuqing, libi, xuchuandong, xzhang$\}$@xidian.edu.cn}}
\date{}

\begin{document}

\maketitle

\begin{abstract}
    In a family ${G_1, G_2, \ldots, G_m}$ of graphs sharing the same vertex set $V$, a cooperative coloring involves selecting one independent set $I_i$ from $G_i$ for each $i\in \{1,2,\ldots,m\}$ such that $\bigcup_{i=1}^m I_i = V$. For a graph class $\mathcal{G}$, let $m_{\mathcal{G}}(d)$ denote the minimum $m$ required to ensure that any graph family ${G_1, G_2, \ldots, G_m}$ on the same vertex set, where $G_i\in\mathcal{G}$ and $\Delta(G_i)\leq d$ for each $i\in \{1,2,\ldots,m\}$, admits a cooperative coloring. For the graph classes $\mathcal{T}$ (trees) and $\mathcal{W}$ (wheels), we find that $m_\mathcal{T}(3)=4$ and $m_\mathcal{W}(4)=5$. 
    Also, we prove that $m_{\mathcal{B}^*}(d)=O(\log_2 d)$ and $m_{\mathcal{L}}(d)=O\left(\frac{\log d}{\log\log d}\right)$, where $\mathcal{B}^*$ represents the class of graphs whose components are balanced complete bipartite graphs, and $\mathcal{L}$ represents the class of graphs whose components are generalized theta graphs.
\end{abstract}

\noindent\textbf{Keywords:} cooperative coloring, adapted coloring, trees, wheels

\noindent\textbf{AMS subject classification 2010:} 05C15, 05C40.

\section{Introduction}

All graphs discussed in this paper are finite, undirected, and loopless. For notation or terminology not explicitly defined here, we follow those used in Bondy \cite{BM0}.

In a family $G_1,G_2,\ldots, G_m$ (not necessarily distinct) of graphs sharing the same vertex set $V$, a \emph{cooperative coloring} is a selection of one independent set $I_i$ from $G_i$ for each $i\in [m]$ such that $\bigcup^m_{i=1}I_i=V$, where $[m]=\{1,2,\ldots,m\}$. Let $G$ be an edge-colored multigraph with an edge coloring $\phi:E(G)\to [m]$. An \emph{adapted coloring} on $G$ is a vertex coloring $\sigma:V(G)\to [m]$ in which no edge is assigned the same color as both of its endpoints. Formally, this means that for every edge $uv$ in $E(G)$, we have $\lnot \big(\phi(uv)=\sigma(u)=\sigma(v)\big)$. It is worth noting that only the colors used for the edges are available for use in an adapted coloring of $G$. For each $i\in [m]$, let $G_i$ denote the graph with vertex set $V(G)$ and edge set $\phi^{-1}(i)$. It can be verified that a cooperative coloring of the graph family $G_1,G_2,\ldots, G_m$ is equivalent to an adapted coloring of $G$. The concept of adapted coloring was initially introduced by Kostochka and Zhu \cite{KZ} and has subsequently been extensively studied \cite{EMZ, HZ, M, MT}.

Let $m(d)$ be the minimum $m$ such that for any family $G_1,G_2,\ldots,G_m$ of graphs on the same vertex set, where $\Delta(G_i)\leq d$ for all $i\in [m]$, there exists a cooperative coloring. For a graph $G$ with $\Delta(G)\leq d$, it is possible to partition its vertex set greedily into $d+1$ independent sets. Consequently, cooperative coloring can be achieved for $d+1$ identical copies of $G$. However, there exist $d+1$ graphs, all with maximum degree $d$, sharing the same vertex set, yet they do not have a cooperative coloring (Aharoni et al.,\cite{AHHS}). Therefore, we have the lower bound $m(d)\geq d+2$. On the other hand, Haxell \cite{H} proved that $m(d)$ does not exceed $2d$. When all the graphs within the graph family are locally sparse, Loh and Sudakov \cite{LS} showed that the upper bound $2d$ can be improved to $d+o(d)$.

The concept of cooperative coloring can be generalized by relaxing the constraint that all graphs must share the same vertex set. When dealing with a family ${G_1,G_2,\ldots, G_m}$ of graphs with vertex sets $V_1, V_2,\ldots, V_m$ (not necessarily the same), a \emph{cooperative list coloring} involves selecting one independent set from $G_i$ for each $i\in [m]$ in such a way that their union covers the vertex set $V=\bigcup^m_{i=1}V_i$. Bradshaw \cite{B} introduced the notation $l(d)$ to represent the minimum value of $l$ such that every family ${G_1,G_2,\ldots,G_m}$ of graphs with $\Delta(G_i)\leq d$ for $i\in [m]$, and where each vertex $v$ in $\bigcup^m_{i=1}V(G_i)$ belongs to at least $l$ graphs in this family, admits a cooperative list coloring. In this definition, the graph family must consist of at least $l$ graphs since each vertex must be part of at least $l$ graphs. Bradshaw \cite{B} summarized results about $m(d)$ and $l(d)$ with the following inequality:
\begin{equation}\label{ineq1}
    d+2\leq m(d)\leq l(d) \leq 2d.
\end{equation}

Even for $d=3$, the precise value of $m(d)$ remains unknown. To refine understanding of this problem, researchers have delved into its study within specific graph classes. The first of the following definitions was introduced by Aharoni et al.~\cite{ABCHJ}, while the latter was proposed by Bradshaw \cite{B}.


\begin{df}{\upshape \cite{ABCHJ}}
For a graph class $\mathcal{G}$, let $m_{\mathcal{G}}(d)$ be the minimum value of $m$ such that any family $G_1,G_2,\ldots,G_m$ of graphs on the same vertex set, where $G_i\in\mathcal{G}$ and $\Delta(G_i)\leq d$ for all $i\in [m]$, admits a cooperative coloring.
\end{df}


\begin{df}{\upshape \cite{B}}
For a graph class $\mathcal{G}$, let $l_{\mathcal{G}}(d)$ be the minimum value of $l$ such that any family $G_1, G_2,\ldots, G_m$ of graphs, where $G_i\in\mathcal{G}$, $\Delta(G_i)\leq d$ for all $i\in [m]$, and each $v\in \bigcup^m_{i=1}V(G_i)$ belongs to at least $l$ graphs in this family, admits a cooperative list coloring.
\end{df}


Obviously, $m_{\mathcal{G}}(d)\leq l_{\mathcal{G}}(d)$ for any graph class $\mathcal{G}$.
A \emph{chordal graph} is defined as a graph in which every cycle of length greater than three contains at least one chord, which is an edge not part of the cycle. Aharoni et al.\ \cite{ABCHJ, ABZ, AHHS} explored the values of $m_{\mathcal{G}}(d)$ when $\mathcal{G}$ is represented by the class of chordal graphs, paths, trees, and bipartite graphs, respectively. In this paper, we specify the default base for logarithms as the natural number $e$ when the base is missing.

\begin{thm}{\upshape \cite{ABZ}}\label{chordal}
    Let $\mathcal{C}$ be the class of chordal graphs. Then $m_{\mathcal{C}}(d)=d+1$ for $d\geq 1$. 
\end{thm}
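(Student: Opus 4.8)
The plan is to prove the two inequalities $m_{\mathcal{C}}(d)\ge d+1$ and $m_{\mathcal{C}}(d)\le d+1$ separately. For the lower bound I would exhibit $d$ chordal graphs of maximum degree $d$ with no cooperative coloring, the natural candidate being $G_1=\cdots=G_d=K_{d+1}$, the complete graph on $d+1$ vertices: it is chordal and $d$-regular, and any independent set in it is a single vertex. Hence $d$ independent sets $I_1,\dots,I_d$ cover at most $d<d+1$ vertices, so no cooperative coloring can exist, giving $m_{\mathcal{C}}(d)\ge d+1$.

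The substance is the upper bound: every family $G_1,\dots,G_{d+1}$ of chordal graphs with $\Delta(G_j)\le d$ admits a cooperative coloring, equivalently a map $c\colon V\to[d+1]$ for which each class $c^{-1}(j)$ is independent in $G_j$. I would proceed by induction on $|V|$. Since each $G_j$ is chordal it has a simplicial vertex, so I would pick a vertex $v$ simplicial in, say, $G_1$, whence $N_{G_1}(v)$ is a clique of $G_1$. Deleting $v$ from every graph keeps all of them chordal with maximum degree $\le d$, so by induction $V\setminus\{v\}$ has a cooperative coloring with classes $I_1',\dots,I_{d+1}'$. It then remains to reinsert $v$: if some color $j$ has no $G_j$-neighbor of $v$ inside $I_j'$, we simply place $v$ in $I_j$ and are done.

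The hard part is the case where $v$ is blocked in all $d+1$ colors, with a blocker $u_j\in I_j'\cap N_{G_j}(v)$ for each $j$. Here I would attempt a Haxell-type alternating recoloring: try to move some $u_j$ to another class so as to free color $j$ for $v$, growing an alternating tree of recolorings and backtracking whenever a move is itself blocked. Chordality enters through the clique $N_{G_1}(v)$, since $I_1'$ can contain at most one of its vertices, but the genuine difficulty is that a vertex simplicial in $G_1$ carries no special structure in the other graphs, so the augmentation must be controlled using the chordal structure of all $d+1$ graphs at once. I expect this simultaneous control to be the main obstacle, and the natural way to secure it is the topological method of Aharoni--Berger--Ziv and Meshulam: recast the cooperative coloring as an ISR problem and bound the topological connectivity of the independence complex of each chordal graph by a domination-type parameter, which for maximum degree $d$ is exactly large enough to make $d+1$ classes suffice, while the $K_{d+1}$ example shows that $d$ do not.
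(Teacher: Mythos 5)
The paper does not prove this theorem; it is quoted from Aharoni--Berger--Ziv \cite{ABZ}, so there is no in-paper argument to compare against, and your proposal has to be judged on its own.

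Your lower bound is correct and complete: $d$ copies of $K_{d+1}$ are chordal, $d$-regular, and their independent sets are singletons, so $d$ of them cover at most $d$ of the $d+1$ vertices; hence $m_{\mathcal{C}}(d)\ge d+1$.

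The upper bound, which is the entire content of the theorem, is not established. Your induction on a vertex $v$ simplicial in $G_1$ stalls exactly where you say it does: after deleting $v$ and coloring the rest, $v$ can be blocked in all $d+1$ classes, and the ``Haxell-type alternating recoloring'' you invoke is, as described, just the argument that proves $m(d)\le 2d$ for arbitrary graphs --- it does not use chordality at all, and you give no mechanism by which the chordal structure of $G_2,\dots,G_{d+1}$ (in which $v$ is not simplicial) controls the augmentation. The single observation you extract from chordality, that $I_1'$ meets the clique $N_{G_1}(v)$ in at most one vertex, is nowhere near enough to save $d$ colors. Your fallback --- recasting the problem as an ISR problem and bounding the topological connectivity of the independence complex of a chordal graph by a domination-type parameter --- is indeed the route taken in \cite{ABZ}, but naming the machinery is not the same as running it: one would still need to (i) state and prove the connectivity bound $\eta\bigl(\mathcal{I}(G)\bigr)\ge\widetilde{\gamma}(G)$ for chordal $G$, (ii) verify that the relevant domination parameter of a graph with maximum degree $d$ is large enough, and (iii) correctly perform the translation between cooperative colorings and ISRs (which the paper itself notes is not straightforward). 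None of these steps appears in the proposal, so the upper bound remains a citation rather than a proof.
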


\begin{thm}{\upshape \cite{AHHS}}\label{path}
    Let $\mathcal{P}$ be the class of paths. Then $m_{\mathcal{P}}(2)=3$.
\end{thm}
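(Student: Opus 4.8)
The plan is to prove the two inequalities separately. For $m_{\mathcal{P}}(2)\ge 3$ it suffices to exhibit one family of two paths on a common vertex set with no cooperative coloring. I would take $V=\{1,2,3,4\}$, let $G_1$ be the path $1\text{-}2\text{-}3\text{-}4$ and $G_2$ the path $1\text{-}3\text{-}2\text{-}4$. A cooperative coloring of two graphs is exactly a partition $V=I_1\sqcup(V\setminus I_1)$ with $I_1$ independent in $G_1$ and $V\setminus I_1$ independent in $G_2$; since enlarging $I_1$ only shrinks its complement, any such $I_1$ extends to a maximal independent set of $G_1$ whose complement is still $G_2$-independent. So it is enough to check the three maximal independent sets $\{1,3\},\{1,4\},\{2,4\}$ of $G_1$: their complements $\{2,4\},\{2,3\},\{1,3\}$ each contain an edge of $G_2$. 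Hence no cooperative coloring exists and $m_{\mathcal{P}}(2)\ge 3$.

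For $m_{\mathcal{P}}(2)\le 3$ I must show every triple of paths $G_1,G_2,G_3$ on a common $V$ has a cooperative coloring. First I would note that a cooperative coloring is the same as a map $c:V\to\{1,2,3\}$ with each class $c^{-1}(i)$ independent in $G_i$, and that we may assume each $G_i$ is a spanning (Hamiltonian) path of $V$: any path or linear forest completes to a spanning path without exceeding maximum degree $2$, and independence in the larger path implies independence in the original. A natural first attempt is to use the bipartitions: each $G_i$ splits $V$ into two sides $A_i,B_i$, both $G_i$-independent, and choosing one side per path covers all of $V$ except the vertices lying on the opposite side in all three paths. This settles every instance in which the side vector $v\mapsto(\mathrm{side}_1(v),\mathrm{side}_2(v),\mathrm{side}_3(v))\in\{0,1\}^3$ misses at least one value (pick the complementary triple of sides).

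The difficulty is exactly the case where all eight side patterns occur: no single choice of sides works, and in fact no purely greedy choice helps, since every interior vertex has degree $2$ in each spanning path and hence no ``free'' color. To treat the general case I would recast the problem as an independent transversal: on the disjoint union $H=G_1\sqcup G_2\sqcup G_3$, which is a linear forest with $\Delta(H)\le 2$, and with parts $P_v=\{(v,1),(v,2),(v,3)\}$, a cooperative coloring is precisely an independent transversal. Haxell's bound $2\Delta=4$ is too weak, so I would instead invoke the topological Hall/ISR theorem (Aharoni--Berger--Ziv, Meshulam): such a transversal exists provided that for every $U\subseteq V$ the independence complex of $H\big[\bigcup_{v\in U}P_v\big]=\bigsqcup_{i}G_i[U]$ is $(|U|-2)$-connected.

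The main work, and the step I expect to be the obstacle, is verifying this connectivity hypothesis for all $U$. Here I would use that the independence complex of a disjoint union is the join of the factors, that $\mathrm{conn}(X*Y)\ge \mathrm{conn}(X)+\mathrm{conn}(Y)+2$, and the known homotopy type of the independence complex of a path on $k$ vertices (a point or a sphere of dimension roughly $k/3$, hence connectivity roughly $k/3-1$); restricting a path to $U$ yields a linear forest, whose independence complex is at least as highly connected. Summing the three contributions should push the connectivity up to at least $|U|-2$ for every $U$, with the small values of $|U|$ checked directly. The delicate points are pinning down the exact connectivity of the independence complex of a linear forest on $t$ vertices and making the join arithmetic tight enough to clear $t-2$ for all $t$; this is precisely where the path structure is essential, since arbitrary degree-$2$ graphs genuinely require $4$.
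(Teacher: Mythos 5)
The paper does not prove this statement at all: Theorem~\ref{path} is quoted from \cite{AHHS}, so there is no internal proof to compare against. Judged on its own terms, your proposal is essentially the argument of \cite{AHHS} and it does go through. The lower bound is fine: for the paths $1\text{-}2\text{-}3\text{-}4$ and $1\text{-}3\text{-}2\text{-}4$ the three maximal independent sets $\{1,3\},\{1,4\},\{2,4\}$ of the first path have complements $\{2,4\},\{2,3\},\{1,3\}$, each an edge of the second, and your monotonicity remark correctly reduces to maximal sets. For the upper bound, the ``delicate point'' you flag does close, and with no slack: by Kozlov's computation, $\mathrm{Ind}(P_n)$ is a sphere $S^{\lceil n/3\rceil-1}$ when $n\equiv 0,2 \pmod 3$ and contractible when $n\equiv 1\pmod 3$, so with $\eta:=\mathrm{conn}+2$ one has $\eta(\mathrm{Ind}(P_n))\ge\lceil n/3\rceil\ge n/3$; since $\eta$ is superadditive under joins, a linear forest on $t$ vertices has $\eta\ge t/3$, and the disjoint union $G_1[U]\sqcup G_2[U]\sqcup G_3[U]$ has $\eta\ge 3\cdot(t/3)=t=|U|$, which is exactly the $(|U|-2)$-connectivity hypothesis of the topological ISR theorem. (Be careful with the phrase ``at least as highly connected'' as the path on $|U|$ vertices: that is literally false when $|U|\equiv 1\pmod 3$, where the path's complex is contractible while the forest's need not be; the correct and sufficient statement is the bound $\eta\ge |U|/3$.) Your ``bipartition'' paragraph is dispensable motivation. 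So the proposal is a correct reconstruction of the cited proof, modulo writing out the connectivity computation you only sketched.
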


\begin{thm}{\upshape \cite{ABCHJ}}\label{tree}
    Let $\mathcal{T}$ be the class of trees and $\mathcal{B}$ be the class of bipartite graphs. Then for $d\geq 2$, 
    $$\log_2\log_2d\leq m_{\mathcal{T}}(d)\leq (1+o(1))\log_{4/3}d,$$
    $$\log_2d\leq m_{\mathcal{B}}(d)\leq (1+o(1))\frac{2d}{\log d}.$$
\end{thm}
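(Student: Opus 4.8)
The plan is to treat the four bounds separately, using explicit extremal constructions for the two lower bounds and the probabilistic method (inclusion of random vertices, made independent by a local rule, followed by the Lov\'asz Local Lemma) for the two upper bounds.

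For the lower bound $m_{\mathcal B}(d)\ge\log_2 d$ I would exhibit a concrete hard family. Put $V=\{0,1\}^m$ and, for each coordinate $i\in[m]$, let $G_i$ be the complete bipartite graph between $\{x\in V: x_i=0\}$ and $\{x\in V: x_i=1\}$; this is bipartite with $\Delta(G_i)=2^{m-1}$. Any independent set of a complete bipartite graph lies inside one side, so choosing $I_i$ amounts to fixing a bit $b_i$ and covering only strings with $x_i=b_i$; the complementary string $(\bar b_1,\dots,\bar b_m)$ is then left uncovered. Hence this family has no cooperative coloring, which gives $m_{\mathcal B}(2^{m-1})>m$, i.e. $m_{\mathcal B}(d)\ge\log_2 d$. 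For the tree bound $m_{\mathcal T}(d)\ge\log_2\log_2 d$ I would build the hard family recursively: from a family of $k$ trees of maximum degree $D_k$ with no cooperative coloring, form many vertex-disjoint copies and join a distinguished vertex of each copy by one new tree (a spider), arranging that an independent set of the new tree can ``rescue'' only a few copies, so that some copy receives no vertex of colour $k+1$ and must be $k$-coloured, a contradiction. Each step should raise the degree only polynomially (essentially squaring it), so forcing $k$ colours requires $d$ doubly exponential in $k$; inverting yields $m_{\mathcal T}(d)\ge(1-o(1))\log_2\log_2 d$.

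For the tree upper bound I would argue probabilistically. Root each tree $T_i$ and, for every vertex $v$ and index $i$, place $v$ in a random set $S_i$ independently with probability $\tfrac12$; then set $I_i=\{v: v\in S_i,\ \mathrm{parent}_i(v)\notin S_i\}$. A short check shows each $I_i$ is independent in $T_i$, and $\Pr[v\in I_i]=\tfrac14$ for non-roots, depending only on the two variables attached to $v$ and its parent in $T_i$. Let $B_v$ be the event that $v$ is covered by no $I_i$; since the events $\{v\in I_i\}$ use disjoint variables across $i$ we get $\Pr[B_v]\le(3/4)^m$, and $B_v$ is mutually independent of all but the $O(md)$ events $B_w$ with $w$ near $v$ in some $T_i$. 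Applying the symmetric Local Lemma, a cooperative coloring exists once $e\,(3/4)^m\,O(md)\le1$, i.e. once $m\ge(1+o(1))\log_{4/3}d$, which is the claimed bound.

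The bipartite upper bound is the step I expect to be hardest. The same scheme with the generic ``no neighbour chosen'' rule only gives $\Pr[v\in I_i]\approx\tfrac1d$, while the dependency degree jumps to $\approx md^2$ because of second neighbourhoods, and the Local Lemma then forces $m\gtrsim 2d\log d$, which is \emph{worse} than the trivial $2d$. To reach $(1+o(1))\tfrac{2d}{\log d}$ one must exploit that bipartite graphs are triangle-free: I would cover whole sides wherever possible and use a semi-random (nibble) refinement in which each round removes an efficient independent set while the triangle-free structure keeps the surviving maximum degree under control, so that after about $\tfrac{2d}{\log d}$ rounds every vertex is covered. Making this process localizable enough for the Local Lemma, that is, balancing the per-colour covering probability against the dependency created by second neighbourhoods, is the main obstacle and the part that requires genuine care.
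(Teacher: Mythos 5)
This is a cited result (from~\cite{ABCHJ}); the paper you are reading gives no proof of it, so your proposal can only be measured against the original source and on its own terms. Two of your four bounds are genuinely complete: the hypercube construction for $m_{\mathcal B}(d)\ge\log_2 d$ is exactly the standard one (note its monochromatic components are balanced complete bipartite graphs, which is precisely what Section~3 of this paper relies on when it asserts $m_{\mathcal B_{bc}}(d)\ge\log_2 d$), and your tree upper bound --- the ``in $S_i$ but parent not in $S_i$'' rule, inclusion probability $1/4$ per tree with independent coins across $i$, failure probability $(3/4)^m$, dependency degree $O(md)$, symmetric Local Lemma --- is a correct and complete argument that matches where the base $4/3$ comes from.

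The other two bounds have real gaps. For $m_{\mathcal T}(d)\ge\log_2\log_2 d$, the entire content of the proof is the mechanism by which the $(k+1)$-st tree is forced to fail, and you have not supplied it: a spider joining distinguished vertices of many disjoint copies does \emph{not} have the property that an independent set can ``rescue only a few copies'' --- an independent set of a star can contain every leaf, hence rescue every copy simultaneously. The recursion must be arranged so that covering one copy with colour $k+1$ forces an uncovered vertex elsewhere, and the claim that each step only squares the degree is asserted rather than derived from any specified construction; as written this part proves nothing. For $m_{\mathcal B}(d)\le(1+o(1))\frac{2d}{\log d}$ you correctly diagnose that the naive scheme fails, but the proposed repair does not close the gap either: with any locally defined random independent set the dependency degree is at least $d$, so the symmetric Local Lemma needs per-graph inclusion probability $\Omega\bigl(\frac{(\log d)^2}{d}\bigr)$ to succeed with only $\frac{2d}{\log d}$ graphs, which is strictly more than the $\Theta\bigl(\frac{\log d}{d}\bigr)$ that Shearer/Li--Zang-type triangle-free arguments deliver. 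So ``exploit triangle-freeness plus a nibble plus LLL,'' as described, cannot reach the stated bound; the argument must use bipartiteness itself (whole colour classes), and you have not exhibited a scheme that does so while remaining local enough for the Local Lemma. You should either work out the actual constructions and the bipartite covering scheme of~\cite{ABCHJ}, or simply cite that paper as this one does.
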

Let $\mathcal{F}$ be the graph class of forests.
Aharoni et al.\ \cite{ABCHJ} showed that $m_\mathcal{T}(d)=m_\mathcal{F}(d)$ for $d\geq 2$. It follows from $\mathcal{T}\subseteq \mathcal{F}$ that $m_\mathcal{T}(d)\leq m_\mathcal{F}(d)$. Conversely, consider $m:=m_\mathcal{T}(d)$ forests $F_1,F_2,\ldots,F_m$ with the maximum degree $d$. When $d\geq2$, we can augment each $F_i$ by adding edges to obtain a tree $F'_i$ that maintains the maximum degree, for every $i\in[m]$. By the definition of $m_\mathcal{T}(d)$, the graph family ${F'_1,F'_2,\ldots,F'_m}$ admits a cooperative coloring. This implies that the graph family ${F_1,F_2,\ldots,F_m}$ also admits a cooperative coloring, therefore $m_\mathcal{F}(d)\leq m$. 

Bradshaw and Masa\v{r}\'{i}k \cite{BM} investigated the upper bound on $m_{\mathcal{G}}(d)$ for the class $\mathcal{G}$ of degenerate graphs. Since every tree is a 1-degenerate graph, the following result generalizes Theorem \ref{tree} at the expense of a constant factor. 

\begin{thm}{\upshape \cite{BM}}\label{degenerate}
    Let $\mathcal{G}$ be the class of graphs and every graph in $\mathcal{G}$ is at most $k$-degenerate. Then $m_{\mathcal{G}}(d)\leq 13(1+k\log_2(kd))$.
\end{thm}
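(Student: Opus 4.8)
The plan is to prove the bound by a randomized, multi-round coloring scheme in which degeneracy keeps each round cheap (only $O(k)$ colors) while the Lov\'asz Local Lemma certifies a successful final outcome. Partition the $m$ colors into $T$ pairwise disjoint blocks $C_1,\dots,C_T$, each of size $s$, together with one reserved color $c^*$; I will take $s=\Theta(k)$ and $T=\Theta(\log_2(kd))$, so that $m=Ts+1=\Theta\big(k\log_2(kd)\big)$ and the ``$1+$'' accounts for the reserved color (and the small-$d$ regime). Since each $G_j$ is $k$-degenerate, fix once and for all an acyclic orientation of every graph with maximum out-degree at most $k$; every induced subgraph inherits such an orientation. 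The process maintains a set $U_r$ of still-uncolored vertices with $U_0=V$: in round $r$ each $v\in U_{r-1}$ is given an independent uniform color $c(v)\in C_r$, and $v$ is called \emph{good} if in $G_{c(v)}$ none of its out-neighbors lying in $U_{r-1}$ also received the color $c(v)$. The good vertices are colored permanently and removed, and $U_r$ is the set of remaining (bad) vertices.

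Two structural facts make any output of this process a partial cooperative coloring. First, because the blocks are disjoint, a given color is used in exactly one round, so a vertex colored in one round can never conflict with a vertex colored in another. Second, if two adjacent vertices of some $G_i$ with $i\in C_r$ both receive color $i$, the \emph{tail} of that oriented edge sees a monochromatic out-neighbor and is therefore not good; hence each color class is genuinely independent in its own graph. Thus it only remains to control the leftover set $U_T$, and for that I reserve the color $c^*$: the goal is to arrange matters so that $U_T$ induces no edge of $G_{c^*}$, after which coloring all of $U_T$ with $c^*$ completes a full cooperative coloring.

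The degeneracy is exactly what powers the probabilistic estimates. Conditioned on $c(v)=i$, the at most $k$ out-neighbors of $v$ each repeat color $i$ with probability $1/s$, so $\Pr[v\text{ is bad}]\le k/s$, a bound \emph{independent} of the possibly large degree $d$. Taking $s\approx 2k$ makes every vertex bad with probability at most $1/2$ in each round, even conditioned on the history, whence $\Pr[v\in U_T]\le 2^{-T}$. I would then apply the Lov\'asz Local Lemma to the bad events $B_w=\{\,w\in U_T\text{ and some }G_{c^*}\text{-neighbor of }w\text{ lies in }U_T\,\}$; the complement of all $B_w$ is precisely the event that $U_T$ is edgeless in $G_{c^*}$. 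Since $B_w\subseteq\{w\in U_T\}$ we have $\Pr[B_w]\le 2^{-T}$, and choosing $T=\Theta(\log_2(kd))$ pushes this below the reciprocal of the dependency degree, so the local lemma guarantees a valid outcome. Tracking the constant $e$ from the local lemma together with the block size $s\approx 2k$ is what yields the explicit factor $13$.

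The main obstacle is precisely this local-lemma step, for two reasons. First, each $B_w$ is determined by the colors assigned to $w$, to its $G_{c^*}$-neighbors, and to all vertices whose choices decide whether these survive across the $T$ rounds (their out-neighbors in the block graphs, iterated over rounds), so organizing these dependencies and bounding the dependency degree $D$ is delicate bookkeeping; the correlations among the events ``$w\in U_T$'' must be handled through the per-round factor $\le 1/2$ rather than pretended independent. Second, this is exactly where $\log_2(kd)$ rather than $\log_2 d$ enters: the dependency degree $D$ is polynomial in both $k$ and $d$, so satisfying $e\cdot\Pr[B_w]\cdot(D+1)\le 1$ forces the survival probability down by a factor polynomial in $kd$, costing $\Theta(\log_2(kd))$ halving rounds. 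Verifying the inequality with the claimed constants, and checking that the reserved color legitimately finishes the coloring on the now edgeless $U_T$, will complete the argument.
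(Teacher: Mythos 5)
This theorem is quoted by the paper from Bradshaw and Masa\v{r}\'{i}k without proof, so there is no in-paper argument to compare yours against; I can only assess the proposal on its own terms. Several of your ingredients are sound: a $k$-degenerate graph does admit an acyclic orientation with out-degree at most $k$, the conditional bound $\Pr[v\text{ bad in round }r\mid\text{history}]\le k/s\le 1/2$ for $s=2k$ is correct, and your observation that only the \emph{tail} of a monochromatic oriented edge is disqualified does guarantee that each color class is independent in its own graph.

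The genuine gap is the Lov\'asz Local Lemma step, and it is not ``delicate bookkeeping'' but the place where the argument as written fails. In your adaptive process, whether $w\in U_T$ depends on whether $w$'s out-neighbors were still uncolored in each round, which depends on the colors of \emph{their} out-neighbors in earlier rounds, and so on: the event $B_w$ is a function of the colors of all vertices within directed distance about $T$ of $w$ in the union of the oriented graphs. The only dependency degree you can certify is therefore of order $(kmd)^{\Theta(T)}$, not polynomial in $kd$ as you assert, and the condition $e\cdot 2^{-T}\cdot(D+1)\le 1$ can never be met: $D$ grows like $C^{2T}$ with $C>2$ while the probability only decays like $2^{-T}$, so increasing $T$ makes matters worse and the choice of $T$ cannot close. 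The standard repair is to decouple the rounds: let every vertex draw its $T$ block-colors up front, declare $v$ good in round $r$ when \emph{no} out-neighbor (uncolored or not) repeats its round-$r$ color, and color $v$ by its first good round. The per-round failure probability is still at most $k/s$, the rounds are now genuinely independent, the tail argument still yields independence of each color class, and the event ``$v$ is never good'' depends only on the colors of $v$ and its at most $km$ out-neighbors, so the dependency degree is $O(km^2d^2)$ and the LLL applies; one then solves $m=2kT+1$ with $T=\Theta\bigl(\log_2(kmd)\bigr)$ self-consistently to extract an explicit constant (which you also leave unverified). As submitted, the single global LLL application is invalid.
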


Bradshaw \cite{B} also studied the value $m_{\mathcal{S}}(d)$ for the class $\mathcal{S}$ of star forests, which improves the lower bound on $m_{\mathcal{T}}(d)$ in Theorem \ref{tree}.

\begin{thm}{\upshape \cite{B}}\label{star}
    Let $\mathcal{S}$ be the class of star forests. Then, $$ m_{\mathcal{S}}(d)\geq (1+o(1))\frac{\log d}{\log\log d}.$$
\end{thm}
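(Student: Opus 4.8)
The plan is to prove the lower bound by exhibiting, for each $m$, a family of $m$ star forests of maximum degree $d\le m^{(1+o(1))m}$ that admits no cooperative coloring; since $\frac{\log d}{\log\log d}\le(1+o(1))m$ for such $d$, this gives $m_{\mathcal S}(d)\ge m+1\ge(1+o(1))\frac{\log d}{\log\log d}$. First I would pass to the adapted-coloring reformulation recorded in the introduction: a cooperative coloring of $\{G_1,\dots,G_m\}$ is exactly a map $c\colon V\to[m]$ with each class $c^{-1}(i)$ independent in $G_i$. Thus the task is to build star forests $G_1,\dots,G_m$ so that \emph{every} $c\colon V\to[m]$ creates, for some $i$, a monochromatic edge of $G_i$, i.e.\ a color-$i$ star whose center and one leaf both receive color $i$.

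A first-moment/random construction is tempting but fails: a star forest on $n$ vertices has at most $n$ edges, so a uniformly random coloring has only $O(n/m)$ monochromatic edges in expectation and $\mathbb E[\#\text{valid colorings}]\approx m^{n}e^{-\Theta(n/d)}\gg1$. Hence one must build the family \emph{explicitly}, and I would do so recursively. Let $\mathcal G_k$ be a family of star forests using colors $[k]$ that has no adapted coloring with palette $[k]$; the base case $\mathcal G_1$ is a single edge colored $1$. For the step I take copies of $\mathcal G_{k-1}$ and introduce color $k$ through stars that block the escape routes: I arrange pivots carrying a copy of $\mathcal G_{k-1}$ so that if a pivot uses color $k$ then its attached copy (appearing as leaves of that pivot's color-$k$ star) is forced back onto palette $[k-1]$, hence unsatisfiable; while if every pivot avoids color $k$, then the pivots themselves form a copy of $\mathcal G_{k-1}$ colored within $[k-1]$, again unsatisfiable. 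In either case no valid coloring survives, yielding $\mathcal G_k$.

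The main obstacle is controlling the maximum degree, which equals the number of leaves of the color-$k$ stars, i.e.\ the number of vertices that must be watched for escaping into color $k$. The naive realization makes the \emph{entire} attached copy into leaves, so the degree grows like $|\mathcal G_{k-1}|$, which squares at each level and only proves the weaker bound $m_{\mathcal S}(d)\ge\log_2\log_2 d$. One cannot be frugal for free, because a bounded star gadget cannot pin a vertex onto a prescribed color (forcing would require a symmetric, never-grounded chain of forced neighbors), so the vertices that might legitimately switch to color $k$ are necessarily spread out rather than concentrated on a single output. To reach $\frac{\log d}{\log\log d}$ the degree may grow by only a factor $\approx k$ per level, so that $d_m\approx m!=m^{(1+o(1))m}$; this forces the crux of the argument: designing $\mathcal G_{k-1}$ with only $\mathrm{poly}(k)\cdot d_{k-1}$ \emph{critical} escape vertices, attaching color-$k$ stars just to those, and proving that unsatisfiability still propagates. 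This efficient gadget, together with the bookkeeping that tracks critical vertices through the recursion, is the heart of the proof, after which the parameter count $d_m\le m^{(1+o(1))m}$ delivers the stated bound.
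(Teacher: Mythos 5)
Your proposal has the right target ($d_m\approx m!$, hence $m\approx\frac{\log d}{\log\log d}$) and the right general framework (recursion on the number of colors, shift functions, copies that each exclude one color), but the construction that is supposed to achieve this growth rate is exactly the part you leave open, and the scheme you sketch cannot be repaired into it. In your inductive step the pivots themselves form a copy of $\mathcal G_{k-1}$ and \emph{each} pivot carries its own attached copy of $\mathcal G_{k-1}$; that is $|V(\mathcal G_{k-1})|$ attached copies, so $|V(\mathcal G_k)|\approx|V(\mathcal G_{k-1})|^2$, and as you yourself note this only recovers $\log_2\log_2 d$. You then declare that the crux is to thin the attachment down to $\mathrm{poly}(k)\cdot d_{k-1}$ ``critical escape vertices,'' but no such gadget is exhibited, and this is not residual bookkeeping --- it is the entire theorem.

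The actual construction (Bradshaw's, and the same device appears verbatim in this paper's lower bound for $m_{\mathcal B_k}(d)$ in Theorem \ref{Bi}, with two apexes instead of one) turns your quantifiers around. At step $t+1$ one takes only $t+1$ shifted copies $H_1,\dots,H_{t+1}$ of $G_t$, where $H_i$ uses no color $i$ and hence admits no adapted coloring avoiding color $i$, and adds a \emph{single} new apex $v$ joined to every vertex of $H_i$ by an edge of color $i$. The color-$i$ subgraph is then one star centered at $v$ with leaf set $V(H_i)$, vertex-disjoint from the color-$i$ star forests inside the other copies, so the class of star forests is preserved; every $H_i$ must place color $i$ on some vertex, so no color is available at $v$. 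There is no need to ``pin'' any vertex to a prescribed color and no need to identify critical escape vertices: the uncolorable vertex is the apex, and the copies do the blocking. Since only $t+1$ copies are used per level, $|V(G_{t+1})|=(t+1)|V(G_t)|+1$ and the monochromatic maximum degree is $\Delta_{t+1}=|V(G_t)|=(e+o(1))\,t!$ --- the factor-of-$k$-per-level growth you wanted falls out of the ``naive'' realization, because the number of copies per level is $t+1$, not $|V(G_t)|$. Your first-moment remark and the final asymptotic bookkeeping are fine, but without this gadget the proof is missing its core.
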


In Section \ref{sec:2}, we study $m_{\mathcal{G}}(d)$ for small values of $d$ within specific graph classes $\mathcal{G}$. 
For the class of trees, denoted as $\mathcal{T}$, the exact value of $m_\mathcal{T}(d)$ remains undetermined, except for the cases where $d\leq 2$.
Specifically, when $d=3$, it can be deduced from Theorems \ref{chordal} and \ref{path} that $3\leq m_\mathcal{T}(3)\leq 4$ since $\mathcal{P}\subseteq \mathcal{T}\subseteq \mathcal{C}$. Although the proof of Theorem \ref{star} in \cite{B} gave a better construction to improve the lower bound of $m_\mathcal{T}(d)$, this construction method cannot prove that $m_\mathcal{T}(3)\geq 4$. We show that $m_\mathcal{T}(3)\geq 4$, leading to the conclusion that $m_\mathcal{T}(3)= 4$.
Furthermore, we prove that $m_\mathcal{W}(4)= 5$, in which $\mathcal{W}$ is the class of graphs whose components are wheels. 
In Section \ref{sec:3}, we study $m_{\mathcal{B}^*}(d)$ and $m_{\mathcal{L}}(d)$, where $\mathcal{B}^*$ represents the class of graphs whose components are balanced complete bipartite graphs, and $\mathcal{L}$ represents the class of graphs whose components are generalized theta graphs.
Specifically, we show that $m_{\mathcal{B}^*}(d)=O(\log_2 d)$ and $m_{\mathcal{L}}(d)=O\big(\frac{\log d}{\log\log d}\big)$.



\section{Trees and wheels}  \label{sec:2}

For the sake of clarity in our discussion, most of the proofs are presented in terms of adapted coloring. By showing that the edge-colored graph in Figure \ref{fig:td} does not admit an adapted coloring, we can establish the lower bound $m_\mathcal{T}(3)\geq 4$.

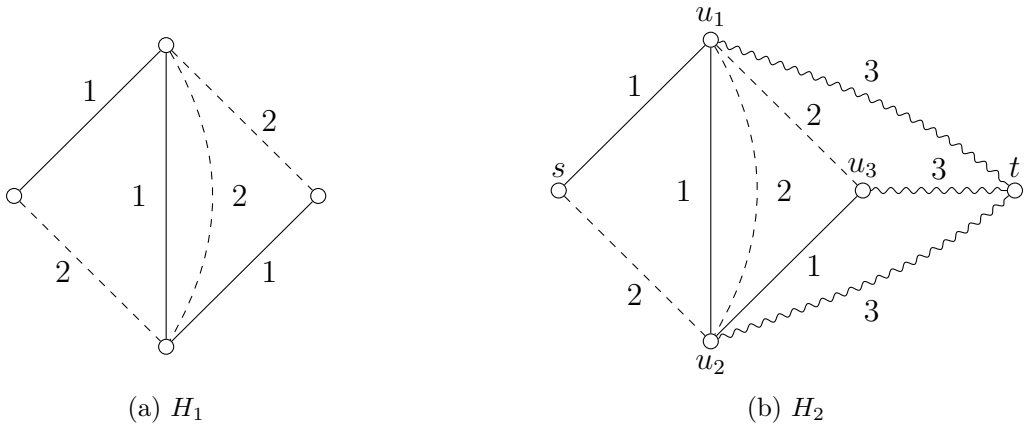
\begin{figure}[ht]
    \centering
    
    \subfloat[$H_1$]{
            \begin{tikzpicture}[inner sep=0.7mm]	
                \node[draw, circle](s0) at (0,0)[]{};
                \node[draw, circle](v1) at (2,2)[]{};
                \node[draw, circle](v2) at (2,-2)[label=below: {\rule{0pt}{5pt}}]{};
                \node[draw, circle](v3) at (4,0)[]{};
                 
                \draw[-] (s0) -- (v1) node[midway,above=5pt] {$1$};
                \draw[-] (v1) -- (v2) node[midway,left=5pt] {$1$};
                \draw[-] (v2) -- (v3) node[midway,right=5pt] {$1$};
                
                \draw[dashed] (s0) -- (v2) node[midway,left=5pt] {$2$};
                \draw[dashed] (v1) to[out=-60,in=60] node[midway,right=5pt] {$2$} (v2) ;
                \draw[dashed] (v1) -- (v3) node[midway,right=5pt] {$2$};
            \end{tikzpicture}
    }
    \hspace{6em}
    \subfloat[{$H_2$}]{
            \begin{tikzpicture}[inner sep=0.7mm]	
                \node[draw, circle](s) at (0,0)[label=above: $s$]{};
                \node[draw, circle](u1) at (2,2)[label=above: $u_1$]{};
                \node[draw, circle](u2) at (2,-2)[label=below: $u_2$]{};
                \node[draw, circle](u3) at (4,0)[label=above: $u_3$]{};
                \node[draw, circle](t) at (6,0)[label=above: $t$]{};
                 
                \draw[-] (s) -- (u1) node[midway,above=5pt] {$1$};
                \draw[-] (u1) -- (u2) node[midway,left=5pt] {$1$};
                \draw[-] (u2) -- (u3) node[midway,right=5pt] {$1$};
                
                \draw[dashed] (s) -- (u2) node[midway,below=5pt] {$2$};
                \draw[dashed] (u1) to[out=-60,in=60] node[midway,right=5pt] {$2$}  (u2);
                \draw[dashed] (u1) -- (u3) node[midway,right=5pt] {$2$};
                
                \draw[snake it] (u3) -- (t) node[midway,above=2pt] {$3$};
                \draw[snake it] (u2) to[out=20,in=-140] node[midway,below=5pt] {$3$} (t) ;
                \draw[snake it] (u1) to[out=-20,in=140] node[midway,above=5pt] {$3$} (t) ;
            \end{tikzpicture}
    }
    %
    %
    %
     	
    \caption{\label{fig:gc} In our construction illustrating that $m_\mathcal{T}(3)\geq 4$, these two graphs are elemental subgraphs. It is easy to check that $H_1$ does not admit an adapted coloring using color set $\{1,2\}$.}
\end{figure}

\begin{lem}\label{gadget}
   Let $\sigma$ be an adapted coloring of the edge-colored multigraph $H_2$ depicted in Figure \ref{fig:gc}(b). If $\sigma(s)\neq 3$, then $\sigma(t)\neq 3$.
\end{lem}

\begin{proof}
The subgraph $H_2[s,u_1,u_2,u_3]$ depicted in Figure \ref{fig:gc}(b) does not admit an adapted coloring when using the color set $\{1,2\}$. Consequently, color $3$ must be assigned to one of the vertices in $\{s,u_1,u_2,u_3\}$.
If $\sigma(s)\neq 3$, then at least one of $\{u_1,u_2,u_3\}$ must be colored with $3$. By the definition of adapted coloring, we can deduce that $\sigma(t)\neq 3$.
\end{proof}


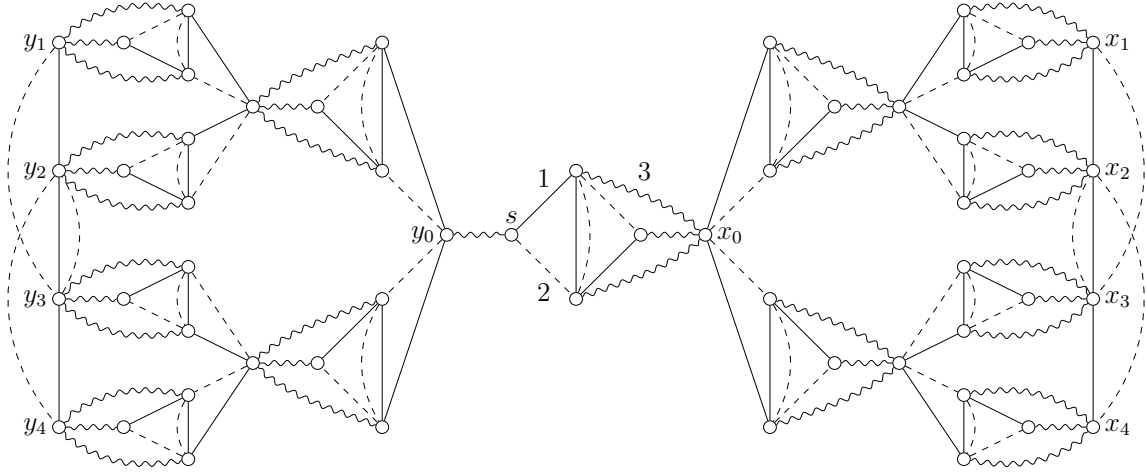
\begin{figure}[ht]\label{fig2}
    \centering
    
    \scalebox{0.85}{
    \begin{tikzpicture}[inner sep=0.7mm]	
        \node[draw, circle](t') at (-1,0)[label=left: $y_0$]{};
        \node[draw, circle](s) at (0,0)[label=above: $s$]{};
        \node[draw, circle](u1) at (1,1)[]{};
        \node[draw, circle](u2) at (1,-1)[]{};
        \node[draw, circle](u3) at (2,0)[]{};
        \node[draw, circle](t) at (3,0)[label=right: $x_0$]{};
         
        \draw[-] (s) --  node[midway,above=5pt] {$1$} (u1);
        \draw[-] (u1) -- (u2);
        \draw[-] (u2) -- (u3);
        
        \draw[dashed] (s) --  node[midway,below=5pt] {$2$} (u2);
        \draw[dashed] (u2) to[out=70,in=-70] (u1);
        \draw[dashed] (u1) -- (u3);
        
        \draw[snake it] (t') --  (s);
        \draw[snake it] (u3) -- (t);
        \draw[snake it] (u2) to[out=20,in=-140] (t);
        \draw[snake it] (u1) to[out=-20,in=140]  node[midway,above=5pt] {$3$} (t);
        
        \node[draw, circle](u4) at (4,1)[]{};
        \node[draw, circle](u5) at (4,3)[]{};
        \node[draw, circle](u6) at (5,2)[]{};
        \node[draw, circle](t1) at (6,2)[]{};
        
        \draw[-] (t) -- (u5);
        \draw[-] (u5) -- (u4);
        \draw[-] (u4) -- (u6);
        
        \draw[dashed] (t) -- (u4);
        \draw[dashed] (u4) to[out=60,in=-60] (u5);
        \draw[dashed] (u5) -- (u6);
        
        \draw[snake it] (u6) -- (t1);
        \draw[snake it] (u4) to[out=20,in=-140] (t1);
        \draw[snake it] (u5) to[out=-20,in=140] (t1);
        
        \node[draw, circle](u7) at (7,2.5)[]{};
        \node[draw, circle](u8) at (7,3.5)[]{};
        \node[draw, circle](u9) at (8,3)[]{};
        \node[draw, circle](t2) at (9,3)[label=right: $x_{1}$]{};
        
        \draw[-] (t1) -- (u8);
        \draw[-] (u8) -- (u7);
        \draw[-] (u7) -- (u9);
        
        \draw[dashed] (t1) -- (u7);
        \draw[dashed] (u7) to[out=60,in=-60] (u8);
        \draw[dashed] (u8) -- (u9);
        
        \draw[snake it] (u9) -- (t2);
        \draw[snake it] (u7) to[out=-10,in=-140] (t2);
        \draw[snake it] (u8) to[out=10,in=140] (t2);
        
        \node[draw, circle](u10) at (7,0.5)[]{};
        \node[draw, circle](u11) at (7,1.5)[]{};
        \node[draw, circle](u12) at (8,1)[]{};
        \node[draw, circle](t3) at (9,1)[label=right: $x_{2}$]{};
        
        \draw[-] (t1) -- (u11);
        \draw[-] (u11) -- (u10);
        \draw[-] (u10) -- (u12);
        
        \draw[dashed] (t1) -- (u10);
        \draw[dashed] (u10) to[out=60,in=-60] (u11);
        \draw[dashed] (u11) -- (u12);

        \draw[snake it] (u12) -- (t3);
        \draw[snake it] (u10) to[out=-10,in=-140] (t3);
        \draw[snake it] (u11) to[out=10,in=140] (t3);

        \node[draw, circle](u4r) at (4,-1)[]{};
        \node[draw, circle](u5r) at (4,-3)[]{};
        \node[draw, circle](u6r) at (5,-2)[]{};
        \node[draw, circle](t1r) at (6,-2)[]{};
        
        \draw[-] (t) -- (u5r);
        \draw[-] (u5r) -- (u4r);
        \draw[-] (u4r) -- (u6r);
        
        \draw[dashed] (t) -- (u4r);
        \draw[dashed] (u4r) to[out=-60,in=60] (u5r);
        \draw[dashed] (u5r) -- (u6r);

        \draw[snake it] (u6r) -- (t1r);
        \draw[snake it] (u4r) to[out=-20,in=140] (t1r);
        \draw[snake it] (u5r) to[out=20,in=-140] (t1r);
        
        \node[draw, circle](u7r) at (7,-2.5)[]{};
        \node[draw, circle](u8r) at (7,-3.5)[]{};
        \node[draw, circle](u9r) at (8,-3)[]{};
        \node[draw, circle](t2r) at (9,-3)[label=right: $x_{4}$]{};
        
        \draw[-] (t1r) -- (u8r);
        \draw[-] (u8r) -- (u7r);
        \draw[-] (u7r) -- (u9r);
        
        \draw[dashed] (t1r) -- (u7r);
        \draw[dashed] (u7r) to[out=-60,in=60] (u8r);
        \draw[dashed] (u8r) -- (u9r);
        
        \draw[snake it] (u9r) -- (t2r);
        \draw[snake it] (u7r) to[out=10,in=140] (t2r);
        \draw[snake it] (u8r) to[out=-10,in=-140] (t2r);
        
        \node[draw, circle](u10r) at (7,-0.5)[]{};
        \node[draw, circle](u11r) at (7,-1.5)[]{};
        \node[draw, circle](u12r) at (8,-1)[]{};
        \node[draw, circle](t3r) at (9,-1)[label=right: $x_{3}$]{};
        
        \draw[-] (t1r) -- (u11r);
        \draw[-] (u11r) -- (u10r);
        \draw[-] (u10r) -- (u12r);
        
        \draw[dashed] (t1r) -- (u10r);
        \draw[dashed] (u10r) to[out=-60,in=60] (u11r);
        \draw[dashed] (u11r) -- (u12r);
        
        \draw[snake it] (u12r) -- (t3r);
        \draw[snake it] (u10r) to[out=10,in=140] (t3r);
        \draw[snake it] (u11r) to[out=-10,in=-140] (t3r);
        \node[draw, circle](u4l) at (-2,1)[]{};
        \node[draw, circle](u5l) at (-2,3)[]{};
        \node[draw, circle](u6l) at (-3,2)[]{};
        \node[draw, circle](t1l) at (-4,2)[]{};
        
        \draw[-] (t') -- (u5l);
        \draw[-] (u5l) -- (u4l);
        \draw[-] (u4l) -- (u6l);
        
        \draw[dashed] (t') -- (u4l);
        \draw[dashed] (u4l) to[out=120,in=-120] (u5l);
        \draw[dashed] (u5l) -- (u6l);

        \draw[snake it] (u6l) -- (t1l);
        \draw[snake it] (u4l) to[out=160,in=-40] (t1l);
        \draw[snake it] (u5l) to[out=-160,in=40] (t1l);
        
        \node[draw, circle](u7l) at (-5,2.5)[]{};
        \node[draw, circle](u8l) at (-5,3.5)[]{};
        \node[draw, circle](u9l) at (-6,3)[]{};
        \node[draw, circle](t2l) at (-7,3)[label=left: $y_{1}$]{};
        
        \draw[-] (t1l) -- (u8l);
        \draw[-] (u8l) -- (u7l);
        \draw[-] (u7l) -- (u9l);
        
        \draw[dashed] (t1l) -- (u7l);
        \draw[dashed] (u7l) to[out=120,in=-120] (u8l);
        \draw[dashed] (u8l) -- (u9l);
        
        \draw[snake it] (u9l) -- (t2l);
        \draw[snake it] (u7l) to[out=-170,in=-40] (t2l);
        \draw[snake it] (u8l) to[out=170,in=40] (t2l);
        
        \node[draw, circle](u10l) at (-5,0.5)[]{};
        \node[draw, circle](u11l) at (-5,1.5)[]{};
        \node[draw, circle](u12l) at (-6,1)[]{};
        \node[draw, circle](t3l) at (-7,1)[label=left: $y_{2}$]{};
                 
        \draw[-] (t1l) -- (u11l);
        \draw[-] (u11l) -- (u10l);
        \draw[-] (u10l) -- (u12l);
        
        \draw[dashed] (t1l) -- (u10l);
        \draw[dashed] (u10l) to[out=120,in=-120] (u11l);
        \draw[dashed] (u11l) -- (u12l);
        
        \draw[snake it] (u12l) -- (t3l);
        \draw[snake it] (u10l) to[out=-170,in=-40] (t3l);
        \draw[snake it] (u11l) to[out=170,in=40] (t3l);
          
        \node[draw, circle](u4rl) at (-2,-1)[]{};
        \node[draw, circle](u5rl) at (-2,-3)[]{};
        \node[draw, circle](u6rl) at (-3,-2)[]{};
        \node[draw, circle](t1rl) at (-4,-2)[]{};
        
        \draw[-] (t') -- (u5rl);
        \draw[-] (u5rl) -- (u4rl);
        \draw[-] (u4rl) -- (u6rl);
        
        \draw[dashed] (t') -- (u4rl);
        \draw[dashed] (u4rl) to[out=-120,in=120] (u5rl);
        \draw[dashed] (u5rl) -- (u6rl);
        
        \draw[snake it] (u6rl) -- (t1rl);
        \draw[snake it] (u4rl) to[out=-160,in=40] (t1rl);
        \draw[snake it] (u5rl) to[out=160,in=-40] (t1rl);
        
        \node[draw, circle](u7rl) at (-5,-2.5)[]{};
        \node[draw, circle](u8rl) at (-5,-3.5)[]{};
        \node[draw, circle](u9rl) at (-6,-3)[]{};
        \node[draw, circle](t2rl) at (-7,-3)[label=left: $y_{4}$]{};
        
        \draw[-] (t1rl) -- (u8rl);
        \draw[-] (u8rl) -- (u7rl);
        \draw[-] (u7rl) -- (u9rl);
        
        \draw[dashed] (t1rl) -- (u7rl);
        \draw[dashed] (u7rl) to[out=-120,in=120] (u8rl);
        \draw[dashed] (u8rl) -- (u9rl);

        \draw[snake it] (u9rl) -- (t2rl);
        \draw[snake it] (u7rl) to[out=170,in=40] (t2rl);
        \draw[snake it] (u8rl) to[out=-170,in=-40] (t2rl);
        
        \node[draw, circle](u10rl) at (-5,-0.5)[]{};
        \node[draw, circle](u11rl) at (-5,-1.5)[]{};
        \node[draw, circle](u12rl) at (-6,-1)[]{};
        \node[draw, circle](t3rl) at (-7,-1)[label=left: $y_{3}$]{};
                 
        \draw[-] (t1rl) -- (u11rl);
        \draw[-] (u11rl) -- (u10rl);
        \draw[-] (u10rl) -- (u12rl);
        
        \draw[dashed] (t1rl) -- (u10rl);
        \draw[dashed] (u10rl) to[out=-120,in=120] (u11rl);
        \draw[dashed] (u11rl) -- (u12rl);

        \draw[snake it] (u12rl) -- (t3rl);
        \draw[snake it] (u10rl) to[out=170,in=40] (t3rl);
        \draw[snake it] (u11rl) to[out=-170,in=-40] (t3rl);
          
        \draw[-] (t2) -- (t3);
        \draw[-] (t3) -- (t3r);
        \draw[-] (t3r) -- (t2r);
        
        \draw[dashed] (t2)  to[out=-50,in=50] (t3r);
        \draw[dashed] (t3r) to[out=120,in=-120] (t3);
        \draw[dashed] (t3)  to[out=-50,in=50] (t2r);
        
        \draw[-] (t2l) -- (t3l);
        \draw[-] (t3l) -- (t3rl);
        \draw[-] (t3rl) -- (t2rl);
        
        \draw[dashed] (t2l)  to[out=-130,in=130] (t3rl);
        \draw[dashed] (t3rl) to[out=60,in=-60] (t3l);
        \draw[dashed] (t3l)  to[out=-130,in=130] (t2rl);
    \end{tikzpicture}
     }
     \caption{\label{fig:td}This is a graph that has been edge-colored with three colors and does not admit an adapted coloring.}
\end{figure}

\begin{thm}\label{T3} 
    Let $\mathcal{T}$ be the class of trees. Then $m_\mathcal{T}(3)\geq 4$.
\end{thm}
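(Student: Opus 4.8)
The plan is to argue entirely in the language of adapted coloring: I regard the edge-colored multigraph $G$ of Figure~\ref{fig:td} as a family of three graphs $G_1,G_2,G_3$ (its color classes) on a common vertex set, and I aim to show that $(G,\phi)$ admits \emph{no} adapted coloring with color set $\{1,2,3\}$. Each $G_i$ will turn out to be a forest with $\Delta(G_i)\le 3$; since enlarging the edge set of a forest only shrinks its family of independent sets, I can then complete each $G_i$ to a tree $T_i$ with $\Delta(T_i)\le 3$ (legitimate because $d=3\ge 2$, exactly as in the reduction $m_\mathcal{T}(d)=m_\mathcal{F}(d)$), and $T_1,T_2,T_3$ will inherit the non-existence of a cooperative coloring. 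Thus the whole problem reduces to proving that $(G,\phi)$ is not adaptedly $3$-colorable, which gives $m_\mathcal{T}(3)\ge 4$.

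For the core argument I would first record that $G$ is built from $13$ isomorphic copies of the gadget $H_2$ of Figure~\ref{fig:gc}(b), where in each copy the \emph{source} is the vertex meeting its internal triangle along the color-$1$ and color-$2$ edges and the \emph{target} is the vertex meeting it along the three color-$3$ (wavy) edges. Lemma~\ref{gadget} then applies to every copy and reads: if the source is not colored $3$, then the target is not colored $3$. The gadgets form two rooted trees: starting from $s$ one passes through the target $x_0$ and then $t_1,t_{1r}$ to reach $x_1,x_2,x_3,x_4$, and starting from $y_0$ one passes through $t_{1l},t_{1rl}$ to reach $y_1,y_2,y_3,y_4$. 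Since the lone color-$3$ edge $y_0s$ forbids $\sigma(s)=\sigma(y_0)=3$, at least one of $s,y_0$ avoids color $3$; iterating Lemma~\ref{gadget} down the corresponding tree yields the dichotomy that either all of $x_1,\dots,x_4$, or all of $y_1,\dots,y_4$, are confined to $\{1,2\}$.

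I would finish with the terminal ``diamond''. Among $x_1,x_2,x_3,x_4$ the color-$1$ edges are $x_1x_2,x_2x_3,x_3x_4$ and the color-$2$ edges are $x_1x_3,x_2x_3,x_2x_4$; the doubled edge $x_2x_3$ forces $\sigma(x_2)\ne\sigma(x_3)$. If $\sigma(x_2)=1$ and $\sigma(x_3)=2$, then $x_1x_2$ forces $\sigma(x_1)=2$ and the color-$2$ edge $x_1x_3$ becomes monochromatic; if $\sigma(x_2)=2$ and $\sigma(x_3)=1$, then $x_2x_4$ forces $\sigma(x_4)=1$ and the color-$1$ edge $x_3x_4$ becomes monochromatic. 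Hence this diamond has no adapted coloring inside $\{1,2\}$, and the same holds for the $y$-diamond; combined with the dichotomy, $(G,\phi)$ has no adapted coloring.

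The main obstacle, as I see it, is the design and verification of the terminal diamond: the real content is recognizing a small color-$\{1,2\}$ gadget that becomes uncolorable exactly once color $3$ has been ``pushed out'' of the leaves by the propagation, and then checking its tiny case analysis. The rest is bookkeeping that must nonetheless be done carefully—verifying that each embedded gadget is oriented so that Lemma~\ref{gadget} propagates in the asserted direction, and confirming that every color class is acyclic with maximum degree $3$ (each branch target $x_0,t_1,\dots$ has color-$3$ degree $3$ together with color-$1$ and color-$2$ degree $2$, while $s$ and $y_0$ carry only the single color-$3$ edge $y_0s$), so that the completion to trees is valid.
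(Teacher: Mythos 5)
Your proposal is correct and follows essentially the same route as the paper: the same gadget $H_2$ with Lemma~\ref{gadget} propagating ``not colored $3$'' from $s$ (or $y_0$) out to the four leaves $x_1,\dots,x_4$ (or $y_1,\dots,y_4$), followed by the observation that the terminal copy of $H_1$ admits no adapted coloring with colors $\{1,2\}$. You merely make explicit two points the paper leaves implicit --- the small case analysis showing $H_1$ is uncolorable, and the verification that each color class is a bounded-degree forest that can be completed to a tree --- both of which check out.
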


\begin{proof}
    Let $(G,\phi)$ be the edge-colored graph using colors from $\{1,2,3\}$ in Figure \ref{fig:td}. Observe that each monochromatic induced subgraph of $(G,\phi)$ is a forest of maximum degree 3. Suppose there exists an adapted coloring $\sigma$ of $(G,\phi)$. 
    
    If $\sigma(s)\neq 3$, then $\sigma(x_i)\neq3$ for $1\leq i\leq 4$ by Lemma \ref{gadget}. Since $G[x_1,x_2,x_3,x_4]$ is isomorphic to $H_1$ in Figure \ref{fig:gc}(a), it is evident that $G[x_1,x_2,x_3,x_4]$ does not admit an adapted coloring using colors from $\{1,2,3\}\backslash\{3\}$, a contradiction. If $\sigma(s)=3$, then $\sigma(y_0)\neq3$. Therefore, $\sigma(y_i)\neq3$ for $1\leq i\leq 4$, by Lemma \ref{gadget}. Similarly, $G[y_1,y_2,y_3,y_4]$ does not admit an adapted coloring using colors from $\{1,2,3\}\backslash\{3\}$, a contradiction. 
\end{proof}

It follows from Theorems \ref{chordal} and \ref{path} that $3\leq m_\mathcal{T}(3)\leq 4$ since $\mathcal{P}\subseteq\mathcal{T}\subseteq\mathcal{C}$. Together with Theorem \ref{T3}, we have the following conclusion.

\begin{cor}
    $m_\mathcal{T}(3)= 4$.
\end{cor}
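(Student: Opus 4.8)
The plan is to pin down $m_\mathcal{T}(3)$ by matching a lower and an upper bound that both equal $4$. Since the lower bound $m_\mathcal{T}(3)\geq 4$ is already established in Theorem \ref{T3}, the only remaining ingredient is the upper bound, which I would obtain from the chordal result in Theorem \ref{chordal} by a monotonicity argument on graph classes.

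First I would record the elementary monotonicity principle: if $\mathcal{G}_1\subseteq\mathcal{G}_2$ are graph classes, then $m_{\mathcal{G}_1}(d)\leq m_{\mathcal{G}_2}(d)$ for every $d$. Indeed, any family $\{G_1,\ldots,G_m\}$ with $G_j\in\mathcal{G}_1$ and $\Delta(G_j)\leq d$ is in particular a family with $G_j\in\mathcal{G}_2$ of the same maximum degree; so if every such family drawn from $\mathcal{G}_2$ admits a cooperative coloring, the same holds a fortiori for families drawn from $\mathcal{G}_1$. Taking $m=m_{\mathcal{G}_2}(d)$ gives the inequality. I would then apply this to $\mathcal{T}\subseteq\mathcal{C}$, which holds because every tree is acyclic and hence vacuously chordal. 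Monotonicity yields $m_\mathcal{T}(3)\leq m_{\mathcal{C}}(3)$, and Theorem \ref{chordal} with $d=3$ evaluates the right-hand side to $3+1=4$. Combining $m_\mathcal{T}(3)\leq 4$ with $m_\mathcal{T}(3)\geq 4$ from Theorem \ref{T3} closes the argument.

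There is essentially no obstacle left at this stage: the corollary is a one-line consequence of two results already in hand, and the genuine difficulty was absorbed into Theorem \ref{T3}, whose proof builds the explicit family of trees (encoded as the edge-colored graph of Figure \ref{fig:td}) admitting no adapted $3$-coloring. That construction, with its $H_1$/$H_2$ gadgets and the color-propagation controlled by Lemma \ref{gadget}, is where all the work sits; the corollary merely certifies that the chordal upper bound $d+1$ cannot be beaten for trees when $d=3$.
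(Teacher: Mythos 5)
Your proposal is correct and matches the paper's own argument exactly: the upper bound $m_\mathcal{T}(3)\le 4$ comes from $\mathcal{T}\subseteq\mathcal{C}$ and Theorem \ref{chordal}, and the lower bound is Theorem \ref{T3}. The only difference is that you spell out the (obvious) monotonicity of $m_{\mathcal{G}}(d)$ under class inclusion, which the paper leaves implicit.
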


A \emph{wheel} $W_n$ is the join of a cycle $C_n$ and a single vertex, i.e., $W_n\cong C_n \vee K_1$, where $n\geq 3$. It is worth mentioning that the construction in the proof of the lower bound of Theorem \ref{w4} is similar to the construction presented in Theorem 1 of \cite{B}.

\begin{thm}\label{w4} 
    Let $\mathcal{W}$ be the class of graphs whose components are wheels.
    Then, $$m_\mathcal{W}(4)=5.$$
\end{thm}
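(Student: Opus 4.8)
The plan is to prove $m_\mathcal{W}(4)\le 5$ and $m_\mathcal{W}(4)\ge 5$ separately. First I would record the organizing observation that, since the hub of $W_n$ has degree $n$ while its rim vertices have degree $3$, the hypothesis $\Delta\le 4$ forces $n\le 4$; hence every component of a graph in $\mathcal{W}$ of maximum degree at most $4$ is either $W_3\cong K_4$ or $W_4$. The upper bound then reduces immediately to the chordal case of Theorem~\ref{chordal}. Given a family $\{G_1,\dots,G_5\}$ with $G_j\in\mathcal{W}$ and $\Delta(G_j)\le 4$, I turn each $W_4$ component (hub $h$, rim $r_1r_2r_3r_4$) into $K_5$ by adding the two absent rim diagonals $r_1r_3,r_2r_4$, leaving the $K_4$ components untouched, and call the result $\hat G_j$. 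Each $\hat G_j$ is a disjoint union of copies of $K_4$ and $K_5$, hence chordal, and $\Delta(\hat G_j)\le 4$ because $\Delta(K_5)=4$. By Theorem~\ref{chordal} the family $\{\hat G_1,\dots,\hat G_5\}$ has a cooperative coloring $(I_1,\dots,I_5)$, and since $E(G_j)\subseteq E(\hat G_j)$ each $I_j$ is still independent in $G_j$; thus $(I_1,\dots,I_5)$ is a cooperative coloring of $\{G_1,\dots,G_5\}$ and $m_\mathcal{W}(4)\le 5$.

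The substance of the theorem is the lower bound, for which I would exhibit four graphs with no cooperative coloring, equivalently (in the adapted-coloring language of Theorem~\ref{T3}) an edge-$4$-colored multigraph whose color classes are unions of wheels and which admits no adapted coloring. My construction uses one distinguished vertex $t$ together with four pairwise disjoint blocks $A_1,A_2,A_3,A_4$, each a $4$-set disjoint from $t$. For each color $c\in\{1,2,3,4\}$ I declare $A_c$ to be the rim of a $W_4$ with hub $t$ in $G_c$, and to be a $K_4$ component in each $G_j$ with $j\ne c$. One checks at once that $G_j$ is then the disjoint union of the single $W_4$ on $\{t\}\cup A_j$ with the three $K_4$'s on the remaining blocks, so $G_j\in\mathcal{W}$ with $\Delta(G_j)=4$, and that every vertex lies in a wheel of every color.

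The forcing is the whole point. Fixing a color $c$, in each of the three graphs $G_j$ with $j\ne c$ the set $A_c$ is a clique, so color $j$ occupies at most one vertex of $A_c$; hence the three colors different from $c$ together occupy at most three of the four vertices of $A_c$, and some vertex $a^{\ast}\in A_c$ must receive color $c$. Because $t$ is the hub of a $W_4$ on $\{t\}\cup A_c$ in $G_c$, the edge $t a^{\ast}$ has color $c$, so $\sigma(t)=c$ would create a monochromatic edge; therefore $\sigma(t)\ne c$. Running this over all four colors shows that $\sigma(t)$ can be none of $1,2,3,4$, a contradiction. Hence no adapted coloring exists and $m_\mathcal{W}(4)\ge 5$, which together with the first paragraph yields $m_\mathcal{W}(4)=5$.

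I expect the only genuine difficulty to be discovering the gadget itself: the idea of padding each block so that three monochromatic cliques can cover at most three of its four vertices, forcing the fourth color to appear, and then using a hub to convert that forced appearance into a prohibition at $t$. It is worth noting that this is where the non-chordal behavior of $W_4$ must be exploited, since a family all of whose members are unions of $K_4$'s consists of chordal graphs of maximum degree $3$ and is cooperatively colorable by Theorem~\ref{chordal} with $d=3$. Once the configuration is written down, verifying that each color class is a union of wheels of maximum degree $4$ and that the forcing closes (parallel to the two-case check in the proof of Theorem~\ref{T3}) is routine, and the upper bound is immediate.
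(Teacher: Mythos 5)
Your proposal is correct and follows essentially the same route as the paper: the upper bound completes each wheel component to a chordal graph of maximum degree $4$ and invokes Theorem~\ref{chordal}, and your lower-bound gadget (a hub $t$ with four $4$-blocks, each block being a clique in three of the graphs and the rim of a $W_4$ at $t$ in the fourth) is exactly the paper's construction, with your clique-counting argument simply making explicit why each block must receive the remaining color. The only cosmetic difference is that you complete $W_4$ to $K_5$ rather than to $K_5-e$, which changes nothing.
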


\begin{proof}
First, we show that $m_\mathcal{W}(4)\leq 5$. Given a graph family $G_1,G_2,\ldots,G_5$ of $\mathcal{W}$ on the same vertex set, where $\Delta(G_i)\leq 4$ for $i\in [5]$, it can be checked that each $G_i$ is either a $C_3\vee K_1 (\cong K_4)$ or a $C_4\vee K_1(\subseteq K_5-e)$. Both $K_4$ and $K_5-e$ are chordal graphs. Thus each $G_i$ is a subgraph of a chordal graph $G^*_i$ with the maximum degree at most 4. It follows from Theorem \ref{chordal} that the graph family $G^*_1, G^*_2,\ldots, G^*_5$ admits a cooperative coloring. Consequently, the graph family $G_1, G_2,\ldots, G_5$ admits a cooperative coloring.

Next, we construct an edge-colored graph $G$ with a color set $\{1,2,3,4\}$ such that its maximal monochromatic subgraphs are graphs in $\mathcal{W}$ with the maximum degree at most $4$. Afterward, we show that $G$ does not admit an adapted coloring. By the relation of adapted coloring and cooperative coloring, we get $m_\mathcal{W}(4)\geq 5$.

    \begin{figure}[ht]
        \centering
        \scalebox{0.9}{
        \subfloat[{$W_3$}]{
                \begin{tikzpicture}[inner sep=0.7mm]	
                    \node[draw, circle](s) at (0,0)[]{};
                    \node[draw, circle](u1) at (0,2)[]{};
                    \node[draw, circle](u2) at (2,2)[]{};
                    \node[draw, circle](u3) at (2,0)[]{};
                     
                    \draw[loosely dotted,line width = 1.5pt] (s) -- (u1);
                    \draw[loosely dotted,line width = 1.5pt] (s) -- (u2);
                    \draw[loosely dotted,line width = 1.5pt] (s) -- (u3);
                    \draw[loosely dotted,line width = 1.5pt] (u1) -- (u3);
                    \draw[loosely dotted,line width = 1.5pt] (u2) -- (u3);
    
                    \draw[-] (u1) -- node[midway,below=3pt] {$1$} (u2);
                    \draw[dashed] (u1) to[out=30,in=150] node[midway,fill=white] {$2$} (u2);
                    \draw[snake it] (u1) to[out=60,in=120] node[midway,above=5pt] {$3$} (u2);
                \end{tikzpicture}
        }
        \hspace{3em}
        \subfloat[{$H_1$}]{
                \begin{tikzpicture}[inner sep=0.7mm]	
                    \node[draw, circle](s) at (0,0)[]{};
                    \node[draw, circle](u1) at (0,2)[]{};
                    \node[draw, circle](u2) at (2,2)[]{};
                    \node[draw, circle](u3) at (2,0)[]{};
                     
                    \draw[loosely dotted,line width = 1.5pt] (s) -- (u1);
                    \draw[loosely dotted,line width = 1.5pt] (s) -- (u2);
                    \draw[loosely dotted,line width = 1.5pt] (s) -- (u3);
                    \draw[loosely dotted,line width = 1.5pt] (u1) -- (u3);
                    \draw[loosely dotted,line width = 1.5pt] (u2) -- (u3);
    
                    \draw[-] (u1) -- node[midway,below=3pt] {$2$} (u2);
                    \draw[dashed] (u1) to[out=30,in=150] node[midway,fill=white] {$3$} (u2);
                    \draw[snake it] (u1) to[out=60,in=120] node[midway,above=5pt] {$4$} (u2);
                \end{tikzpicture}
        }
        \hspace{0.5em}
        \subfloat[{$H_2$}]{
                \begin{tikzpicture}[inner sep=0.7mm]	
                    \node[draw, circle](s) at (0,0)[]{};
                    \node[draw, circle](u1) at (0,2)[]{};
                    \node[draw, circle](u2) at (2,2)[]{};
                    \node[draw, circle](u3) at (2,0)[]{};
                     
                    \draw[loosely dotted,line width = 1.5pt] (s) -- (u1);
                    \draw[loosely dotted,line width = 1.5pt] (s) -- (u2);
                    \draw[loosely dotted,line width = 1.5pt] (s) -- (u3);
                    \draw[loosely dotted,line width = 1.5pt] (u1) -- (u3);
                    \draw[loosely dotted,line width = 1.5pt] (u2) -- (u3);
    
                    \draw[-] (u1) -- node[midway,below=3pt] {$1$} (u2);
                    \draw[dashed] (u1) to[out=30,in=150] node[midway,fill=white] {$3$} (u2);
                    \draw[snake it] (u1) to[out=60,in=120] node[midway,above=5pt] {$4$} (u2);
                \end{tikzpicture}
        }
        \hspace{0.5em}
        \subfloat[{$H_3$}]{
                \begin{tikzpicture}[inner sep=0.7mm]	
                    \node[draw, circle](s) at (0,0)[]{};
                    \node[draw, circle](u1) at (0,2)[]{};
                    \node[draw, circle](u2) at (2,2)[]{};
                    \node[draw, circle](u3) at (2,0)[]{};
                     
                    \draw[loosely dotted,line width = 1.5pt] (s) -- (u1);
                    \draw[loosely dotted,line width = 1.5pt] (s) -- (u2);
                    \draw[loosely dotted,line width = 1.5pt] (s) -- (u3);
                    \draw[loosely dotted,line width = 1.5pt] (u1) -- (u3);
                    \draw[loosely dotted,line width = 1.5pt] (u2) -- (u3);
    
                    \draw[-] (u1) -- node[midway,below=3pt] {$1$} (u2);
                    \draw[dashed] (u1) to[out=30,in=150] node[midway,fill=white] {$2$} (u2);
                    \draw[snake it] (u1) to[out=60,in=120] node[midway,above=5pt] {$4$} (u2);
                \end{tikzpicture}
        }
        \hspace{0.5em}
        \subfloat[{$H_4$}]{
                \begin{tikzpicture}[inner sep=0.7mm]	
                    \node[draw, circle](s) at (0,0)[]{};
                    \node[draw, circle](u1) at (0,2)[]{};
                    \node[draw, circle](u2) at (2,2)[]{};
                    \node[draw, circle](u3) at (2,0)[]{};
                     
                    \draw[loosely dotted,line width = 1.5pt] (s) -- (u1);
                    \draw[loosely dotted,line width = 1.5pt] (s) -- (u2);
                    \draw[loosely dotted,line width = 1.5pt] (s) -- (u3);
                    \draw[loosely dotted,line width = 1.5pt] (u1) -- (u3);
                    \draw[loosely dotted,line width = 1.5pt] (u2) -- (u3);
    
                    \draw[-] (u1) -- node[midway,below=3pt] {$1$} (u2);
                    \draw[dashed] (u1) to[out=30,in=150] node[midway,fill=white] {$2$} (u2);
                    \draw[snake it] (u1) to[out=60,in=120] node[midway,above=5pt] {$3$} (u2);
                \end{tikzpicture}
        }
        }
        \caption{\label{fig:w3Hi}In the graph depicted, each dotted line represents three edges that are colored differently. For instance, in $W_3$, every pair of vertices is connected by three edges, each assigned a distinct edge color from $\{1,2,3\}$. The graphs labeled as $H_1$, $H_2$, $H_3$, and $H_4$ are copies of $W_3$ with edge colorings that have been shifted.}
    \end{figure}
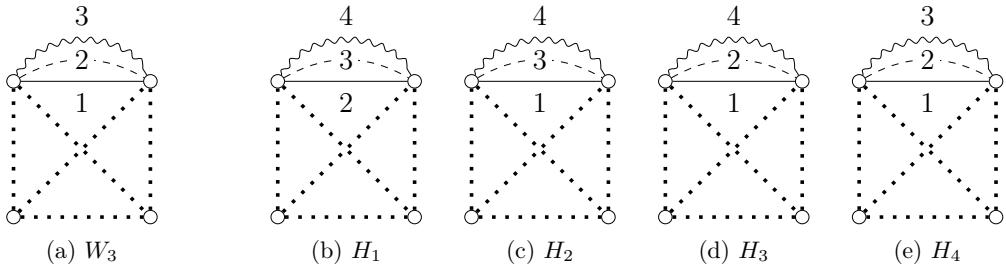
    
    We denote the edge-colored multigraph depicted in Figure \ref{fig:w3Hi}(a) by $W_3$, and denote its edge-coloring be $\phi$. It can be verified that $(W_3,\phi)$ does not admit an adapted coloring using the color set $\{1,2,3\}$. For $1\leq i\leq 4$, the shift function $\psi_i:\{1,2,3\}\to\{1,2,3,4\}$ is defined as:
    $$
    \psi_i(x)=
    \begin{cases}
    x, &1\le x\le i-1,\\
    x+1, &i\le x \le 3.\\
    \end{cases}
    $$
    By applying these shift functions, we can generate four disjoint copies of $W_3$ denoted as $H_1$, $H_2$, $H_3$, and $H_4$ (as shown in Figure \ref{fig:w3Hi}). Each $H_i$ is edge-colored using the function $\psi_i\circ\phi$. It is easy to see that $(H_i,\psi_i\circ\phi)$ is isomorphic to $(W_3,\phi)$ as an edge-colored graph. Therefore, $(H_i,\psi_i\circ\phi)$ does not admit an adapted coloring using the color set $\{1,2,3,4\}\setminus\{i\}$.

    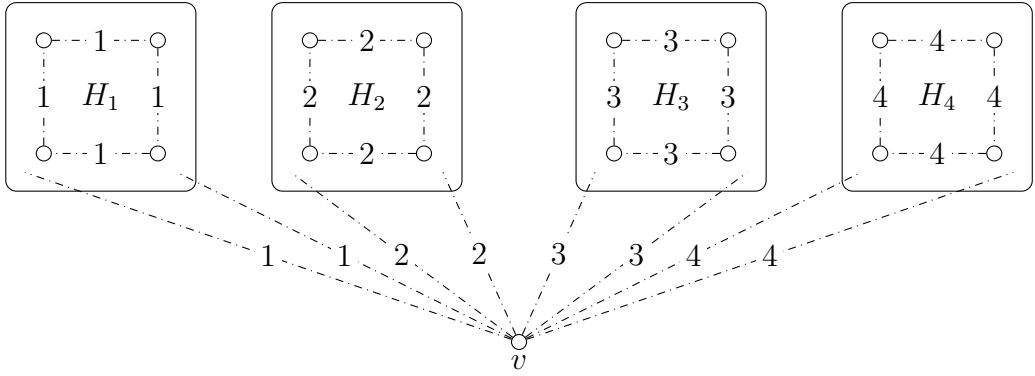
\begin{figure}[ht]
        \centering
                \begin{tikzpicture}[inner sep=0.7mm]	
                    \node[draw, circle](u0) at (0,0)[]{};
                    \node[draw, circle](u1) at (0,1.5)[]{};
                    \node[draw, circle](u2) at (1.5,1.5)[]{};
                    \node[draw, circle](u3) at (1.5,0)[]{};
                     
                    \draw[dash dot] (u0) -- node[midway,fill=white] {$1$} (u1);
                    \draw[dash dot] (u1) -- node[midway,fill=white] {$1$} (u2);
                    \draw[dash dot] (u2) -- node[midway,fill=white] {$1$} (u3);
                    \draw[dash dot] (u3) -- node[midway,fill=white] {$1$} (u0);
                    \draw[rounded corners] (-0.5, -0.5) rectangle (2, 2) {};
                    \node at (0.75, 0.75)   (H1) {$H_1$};
                    \node[draw, circle](v0) at (3.5,0)[]{};
                    \node[draw, circle](v1) at (3.5,1.5)[]{};
                    \node[draw, circle](v2) at (5,1.5)[]{};
                    \node[draw, circle](v3) at (5,0)[]{};
                     
                    \draw[dash dot] (v0) -- node[midway,fill=white] {$2$} (v1);
                    \draw[dash dot] (v1) -- node[midway,fill=white] {$2$} (v2);
                    \draw[dash dot] (v2) -- node[midway,fill=white] {$2$} (v3);
                    \draw[dash dot] (v3) -- node[midway,fill=white] {$2$} (v0);
                    \draw[rounded corners] (3, -0.5) rectangle (5.5, 2) {};
                    \node at (4.25, 0.75)   (H2) {$H_2$};
                    \node[draw, circle](x0) at (7.5,0)[]{};
                    \node[draw, circle](x1) at (7.5,1.5)[]{};
                    \node[draw, circle](x2) at (9,1.5)[]{};
                    \node[draw, circle](x3) at (9,0)[]{};
                     
                    \draw[dash dot] (x0) -- node[midway,fill=white] {$3$} (x1);
                    \draw[dash dot] (x1) -- node[midway,fill=white] {$3$} (x2);
                    \draw[dash dot] (x2) -- node[midway,fill=white] {$3$} (x3);
                    \draw[dash dot] (x3) -- node[midway,fill=white] {$3$} (x0);
                    \draw[rounded corners] (7, -0.5) rectangle (9.5, 2) {};
                    \node at (8.25, 0.75)   (H3) {$H_3$};
                    \node[draw, circle](y0) at (11,0)[]{};
                    \node[draw, circle](y1) at (11,1.5)[]{};
                    \node[draw, circle](y2) at (12.5,1.5)[]{};
                    \node[draw, circle](y3) at (12.5,0)[]{};
                     
                    \draw[dash dot] (y0) -- node[midway,fill=white] {$4$} (y1);
                    \draw[dash dot] (y1) -- node[midway,fill=white] {$4$} (y2);
                    \draw[dash dot] (y2) -- node[midway,fill=white] {$4$} (y3);
                    \draw[dash dot] (y3) -- node[midway,fill=white] {$4$} (y0);
                    \draw[rounded corners] (10.5, -0.5) rectangle (13, 2) {};
                    \node at (11.75, 0.75)   (H4) {$H_4$};
                    \node[draw, circle](v) at (6.25,-2.5)[label=below: $v$]{};
    
                    \draw[dash dot] (v) -- node[midway,fill=white] {$1$} (-0.25,-0.25);
                    \draw[dash dot] (v) -- node[midway,fill=white] {$1$} (1.75,-0.25);
                    \draw[dash dot] (v) -- node[midway,fill=white] {$2$} (3.25,-0.25);
                    \draw[dash dot] (v) -- node[midway,fill=white] {$2$} (5.25,-0.25);
                    \draw[dash dot] (v) -- node[midway,fill=white] {$3$} (7.25,-0.25);
                    \draw[dash dot] (v) -- node[midway,fill=white] {$3$} (9.25,-0.25);
                    \draw[dash dot] (v) -- node[midway,fill=white] {$4$} (10.75,-0.25);
                    \draw[dash dot] (v) -- node[midway,fill=white] {$4$} (12.75,-0.25);
                \end{tikzpicture}
        \caption{\label{fig:W4}An edge-colored multigraph using four colors, where vertex $v$ is connected to each vertex in $H_i$ by an edge colored $i$ for each $1\leq i\leq 4$. This graph does not admit an adapted coloring.}
    \end{figure}

The edge-colored multigraph $G$ (see Figure \ref{fig:W4}) is obtained by the following construction. 
First, add a 4-cycle $C_i$ in $H_i$ for each $i\in [4]$. At the same time, add a new vertex $v$ and join $v$ with each vertex of $\cup_{i=1}^{4}H_i$. Next, assign color $i$ to edges between to $v$ and $H_i$ for $i\in[4]$.
In this construction, each component of the monochromatic subgraphs of $G$ is either a $W_3$ or a $W_4$.
In any adapted coloring of $G$ using color set $\{1,2,3,4\}$, at least one vertex of $H_i$ must be colored with $i$. It implies that no color in $\{1,2,3,4\}$ is available at $v$. Therefore, $G$ does not admit an adapted coloring using color set $\{1,2,3,4\}$.
\end{proof}

\section{Bipartite graphs and generalized theta
graphs} \label{sec:3}

Aharoni et al. \cite{ABCHJ} investigated cooperative coloring of the class of bipartite graphs and showed that $m_\mathcal{B}(d)\geq \log_2 d$ (as stated in Theorem \ref{tree}). A complete bipartite graph $K_{m,n}$ is called \emph{balanced} if $m=n$.
Let $\mathcal{B}^*$ be the class of graphs in which each component is a balanced complete bipartite graph, and we can get $m_{\mathcal{B}^*}(d)\geq \log_2 d$ by the constructive proof of lower bound on bipartite graphs in \cite{ABCHJ}.
Now we show that $\log_2 d$ is asymptotically the best possible for $m_{\mathcal{B}^*}(d)$.  The following famous Lov\'{a}sz Local Lemma first appears in a weaker form in \cite{EL} and can be found in many textbooks, including for example \cite[Chapter 4]{MB}.

\begin{thm}{\upshape \cite{MB}}\label{LocalLem}
    Let $\mathcal{B}$ be a set of bad events. Suppose that each event $B\in \mathcal{B}$ occurs with probability at most $p$, and suppose further that each event $B\in \mathcal{B}$ is independent with all but at most $d$ other events $B'\in \mathcal{B}$. If $epd\leq 1$,
    then with positive probability, no bad event in $\mathcal{B}$ occurs.
\end{thm}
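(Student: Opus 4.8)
The plan is to prove the general (asymmetric) form of the Local Lemma and then specialize to the symmetric statement. Index the events as $\mathcal{B}=\{A_1,\dots,A_n\}$ and, for each $i$, write $D(i)$ for the set of indices $j\neq i$ such that $A_i$ is dependent on $A_j$; by hypothesis $|D(i)|\le d$. The first step is to attach to each event a weight $x_i\in[0,1)$ and to arrange the pointwise bound
\[
\Pr[A_i]\le x_i\prod_{j\in D(i)}(1-x_j).
\]
For the symmetric setting I would simply take all $x_i$ equal to a common value $x$, so this reduces to checking $p\le x(1-x)^{d}$; I fix the value of $x$ at the end.

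The heart of the argument is the following claim, proved by induction on $|S|$: for every index $i$ and every set $S\subseteq[n]\setminus\{i\}$,
\[
\Pr\Big[A_i\ \Big|\ \bigcap_{j\in S}\overline{A_j}\Big]\le x_i .
\]
The base case $S=\emptyset$ is exactly the pointwise bound. For the inductive step I would split $S=S_1\cup S_2$ with $S_1=S\cap D(i)$ and $S_2=S\setminus D(i)$, and rewrite the conditional probability as the ratio of $\Pr[A_i\cap\bigcap_{j\in S_1}\overline{A_j}\mid\bigcap_{k\in S_2}\overline{A_k}]$ over $\Pr[\bigcap_{j\in S_1}\overline{A_j}\mid\bigcap_{k\in S_2}\overline{A_k}]$. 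Since $A_i$ is mutually independent of the events indexed by $S_2$, the numerator is at most $\Pr[A_i]\le x_i\prod_{j\in D(i)}(1-x_j)$; for the denominator I would expand by the chain rule into a product of factors $1-\Pr[A_{j}\mid\cdots]$, each at least $1-x_j$ by the induction hypothesis, since the conditioning sets appearing there are strictly smaller than $S$. This bounds the denominator below by $\prod_{j\in S_1}(1-x_j)\ge\prod_{j\in D(i)}(1-x_j)$, and the two estimates combine to give exactly $x_i$.

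With the claim in hand, the conclusion follows from one more application of the chain rule:
\[
\Pr\Big[\bigcap_{i=1}^n\overline{A_i}\Big]=\prod_{i=1}^n\Big(1-\Pr\big[A_i\ \big|\ \textstyle\bigcap_{j<i}\overline{A_j}\big]\Big)\ge\prod_{i=1}^n(1-x_i)>0,
\]
so with positive probability no bad event occurs. Finally I would specialize by taking all weights equal to a common $x$, so the pointwise bound becomes $p\le x(1-x)^{d}$; optimizing the right-hand side (the maximum is attained at $x=\tfrac{1}{d+1}$) and invoking the elementary estimate $(1+\tfrac1d)^{d}<e$ produces precisely the stated threshold $epd\le1$.

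I expect the delicate step to be the inductive control of the ratio: in particular, making the independence step for the numerator rigorous (it genuinely uses that $A_i$ is mutually, not merely pairwise, independent of the family $\{A_k:k\in S_2\}$), and keeping the bookkeeping straight so that every conditional probability invoked when lower-bounding the denominator really is governed by a set strictly smaller than $S$, which is what licenses the induction. The telescoping product at the end and the final optimization over $x$ are routine by comparison.
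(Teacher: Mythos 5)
The paper does not prove this statement: it is the symmetric Lov\'asz Local Lemma, quoted from \cite{EL,MB} and used as a black box, so there is no in-paper proof to compare against. Your outline is the standard inductive proof of the general (asymmetric) Local Lemma, and that part is sound: the pointwise hypothesis $\Pr[A_i]\le x_i\prod_{j\in D(i)}(1-x_j)$, the induction on $|S|$ giving $\Pr[A_i\mid\bigcap_{j\in S}\overline{A_j}]\le x_i$ via the split $S=S_1\cup S_2$, and the final telescoping product are all correct, including your remark that mutual (not merely pairwise) independence from $\{A_k:k\in S_2\}$ is what the numerator bound really uses.

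The gap is in the final specialization, where you claim that optimizing produces ``precisely the stated threshold $epd\le 1$.'' It does not. With all weights equal to $x$ the pointwise hypothesis reads $p\le x(1-x)^{d}$, whose maximum over $x$ is $\frac{d^d}{(d+1)^{d+1}}$, attained at $x=\frac{1}{d+1}$; the estimate $(1+\frac1d)^d<e$ then gives $\frac{d^d}{(d+1)^{d+1}}>\frac{1}{e(d+1)}$, so what your argument delivers is the usual hypothesis $ep(d+1)\le 1$. Since $(1+\frac1d)^{d+1}>e$, one also has $\frac{d^d}{(d+1)^{d+1}}<\frac{1}{ed}$, so the hypothesis $epd\le 1$ as stated --- with $d$ counting the \emph{other} events each event depends on --- is out of reach of this argument for \emph{every} choice of $x$, not just $x=\frac{1}{d+1}$. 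The theorem as stated is nevertheless true, because Shearer's tight threshold $\frac{(d-1)^{d-1}}{d^d}$ exceeds $\frac{1}{ed}$, but that requires a sharper analysis than the one you sketch. For this paper's purposes the discrepancy is immaterial (the application is asymptotic, and $d$ versus $d+1$ changes nothing), but as a proof of the exact statement you should either weaken the hypothesis to $ep(d+1)\le 1$ or appeal to Shearer's bound.
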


The proof of the following theorem could be proceeded using the idea of choosing independent sets in \cite{LZ}. Nevertheless, by noting that each graph in $\mathcal{B}^*$ is a bipartite graph and each part of its components is an independent set, we present here a bit different but more straightforward and concise proof.

\begin{thm}
    For $d\geq 2$, there holds $m_{\mathcal{B}^*}(d)\leq (1+o(1))\log_2 d$. 
\end{thm}

\begin{proof}
    Consider a family ${G_1, G_2, \ldots, G_m}$ of graphs on the same vertex set $V$, where the components of each $G_i$ are balanced complete bipartite graphs and $\Delta(G_i)\leq d$ for $1\leq i\leq m$. 
    
    
      Let $X_i$ be a set of vertices by choosing one part from each complete bipartite component uniformly at random in $G_i$ for each $1 \leq i \leq m$.
     It is evident that $X_i$ is an independent set in $G_i$. 
    We now show that with a positive probability, $X_1, X_2, \ldots, X_m$ collectively constitute a cooperative coloring of the graph family ${G_1, G_2, \ldots, G_m}$. For each vertex $v \in V$, let $B_v$ be the event that $v\notin \cup^m_{i=1}{X_i}$. 
    Then we get $\Pr(v\notin \cup^m_{i=1}{X_i})=(1/2)^m$ since 
    $\Pr(v\in X_i)=\frac{1}{2}.$ 
  It follows from the degree of $v$ is at most $d$ that $B_v$ is independent with all but at most $2md$ other events. By applying the Lov\'{a}sz Local Lemma (Theorem \ref{LocalLem}), if 
    \begin{equation}
        e\times \bigg(\frac{1}{2}\bigg)^m \times 2md\leq 1,
    \end{equation}
    then no $B_v$ occurs with positive probability, meaning that the sets $X_1, X_2,\ldots, X_m$ form a cooperative coloring.
    The inequality holds when $m\geq (1+o(1))\log_2 d$.
\end{proof}	

The \emph{generalized theta graph} $\theta_{s_1,\ldots,s_k}$ consists of a pair of vertices joined by $k$ internally disjoint paths of lengths $s_1,\ldots,s_k$, where each $s_i\geq 1$.
Let $\mathcal{L}$ be the class of graphs whose components are generalized theta graphs. 
To study the value of $m_{\mathcal{L}}(d)$, we first introduce some definitions and a lemma. Given a rooted tree $T$ with a root $r$, the {\em height} of a vertex $v$ in $T$ is the distance from $v$ to $r$, and the height of $T$ is the maximum height achieved over all vertices $v\in V(T)$. Given integers $q\geq 1$ and
$h\geq 1$, a {\em $q$-ary tree of height $h$} is a rooted tree in which every vertex of height at most $h-1$ has exactly $q$ children. Given an integer $k\geq 1$, we write $\log^{(k)}d = \underbrace{\log \log\ldots \log }_{k \ \text{times}} d$.



\begin{lem}{\upshape \cite{B}\label{lem2}}
Let $q\geq 2$ and $h\geq1$ be fixed integers. If $\mathcal{H}$ is a family of graphs with no
$q$-ary tree of height $h$ as a subgraph, then
$$l_{\mathcal{H}}(d) \leq (1 + o_{q,h}(1))\frac{\log d}{\log^{(h)}d}+ O_q(1).$$
\end{lem}

\begin{thm}{\upshape\label{Bi}}
   For $d \geq 2$, $m_{\mathcal{L}}(d)=(1+o(1))\frac{\log d}{\log \log d}$.
\end{thm}

\begin{proof}
Since no generalized theta graph contains a $3$-ary tree of height $2$ as a subgraph, we have $m_{\mathcal{L}}(d)\leq l_{\mathcal{L}}(d)\leq (1+o(1))\frac{\log d}{\log\log d}$ by Lemma \ref{lem2}. In the following, we discuss the lower bound of $m_{\mathcal{L}}(d)$.

For each $t \geq 1$, we construct a graph $G_t$ whose edges are colored with $\{1, 2, \ldots, t\}$ using some function $\phi_t$. In this graph, for each $i\in [t]$, every component of the monochromatic subgraph induced by the edges of color $i$ is a $K_{2,s}$ with $s\geq 2$, which is a generalized theta graph. We then translate the edge-colored graph $(G_t, \phi_t)$ into a graph family that demonstrates our lower bound. It is worth mentioning that the construction here uses ideas from \cite{B}.

\begin{figure}[ht]
    \centering
            \begin{tikzpicture}[inner sep=0.7mm]	
                \node[draw, circle](u0) at (0,0)[]{};
                \node[draw, circle](u1) at (0,1.5)[]{};
                \node[draw, circle](u2) at (1.5,1.5)[]{};
                \node[draw, circle](u3) at (1.5,0)[]{};
                 
                \draw (u0) -- node[midway,left=5pt] {$1$} (u1);
                \draw (u1) -- node[midway,above=5pt] {$1$} (u2);
                \draw (u2) -- node[midway,right=5pt] {$1$} (u3);
                \draw (u3) -- node[midway,below=5pt] {$1$} (u0);
                \node at (0.75, -2)   (G1) {($G_1$,$\phi_1$)};
                \node[draw, circle](x0) at (9,0)[]{};
                \node[draw, circle](x1) at (9,1.5)[]{};
                \node[draw, circle](x2) at (10.5,1.5)[]{};
                \node[draw, circle](x3) at (10.5,0)[]{};
                 
                \draw (x0) -- (x1);
                \draw (x1) -- (x2);
                \draw (x2) -- node[midway,right=5pt] {$1$} (x3);
                \draw (x3) -- (x0);
                \node[draw, circle](y0) at (5.5,0)[]{};
                \node[draw, circle](y1) at (5.5,1.5)[]{};
                \node[draw, circle](y2) at (7,1.5)[]{};
                \node[draw, circle](y3) at (7,0)[]{};
                 
                \draw[dashed] (y0) -- node[midway,left=5pt] {$2$} (y1);
                \draw[dashed] (y1) --  (y2);
                \draw[dashed] (y2) -- (y3);
                \draw[dashed] (y3) -- (y0);
                \node[draw, circle](u) at (8,3)[]{};
                \node[draw, circle](v) at (8,-1.5)[]{};
                \node at (8, -2)   (G1) {$(G_2,\phi_2)$};

                \draw[dashed] (u) -- (x0);
                \draw[dashed] (u) -- (x1);
                \draw[dashed] (u) --  node[midway,above=5pt] {$2$} (x2);
                \draw[dashed] (u) -- (x3);

                \draw (u) -- (y0);
                \draw (u) --  node[midway,above=5pt] {$1$} (y1);
                \draw (u) -- (y2);
                \draw (u) -- (y3);
                
                \draw[dashed] (v) -- (x0);
                \draw[dashed] (v) -- (x1);
                \draw[dashed] (v) -- (x2);
                \draw[dashed] (v) -- node[midway,below=5pt] {$2$} (x3);

                \draw (v) --  node[midway,below=5pt] {$1$} (y0);
                \draw (v) -- (y1);
                \draw (v) -- (y2);
                \draw (v) -- (y3);
            \end{tikzpicture}
    \caption{\label{G1G2}The first two graphs of the recursively constructed graphs $(G_t,\phi_t)$.}
\end{figure}
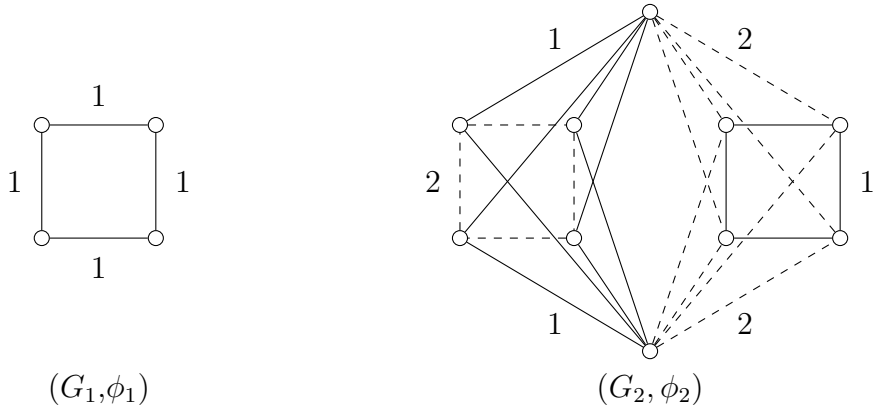

We construct the edge-colored graphs $(G_t,\phi_t)$ recursively, as illustrated in Figure \ref{G1G2} for $(G_1,\phi_1)$ and $(G_2,\phi_2)$. Let $(G_1,\phi_1)$ be a $K_{2,2}$ whose edge is colored with the color 1. Assume that we have constructed $G_t$ along with an edge-coloring $\phi_t:E(G_t)\rightarrow \{1,2,\ldots,t\}$, and assume that $(G_t,\phi_t)$ does not admit an adapted coloring with the color set $\{1,\ldots,t\}$. For $1\leq i\leq t+1$, we define a shift function $\psi_{i}: \{1,2,\ldots,t\}\rightarrow\{1,2,\ldots,t+1\}$ such that
$$
\psi_i(x)=
    \begin{cases}
        x, &1\le x\le i-1,\\
        x+1, &i\le x \le t.\\
    \end{cases}
$$
We begin to construct $(G_{t+1},\phi_{t+1})$ as follows: first create $t+1$ disjoint copies, $H_1, H_2, \ldots, H_{t+1}$, of $G_t$, where each $H_i$ is edge-colored using the function $\psi_i \circ \phi_t$. Note that $(H_i,\psi_i\circ\phi_t)$ is isomorphic to $(G_t,\phi_t)$ as an edge-colored graph. Therefore, $(H_i,\psi_i\circ\phi_t)$ does not admit an adapted coloring with colors from $\{1,2,\ldots,i-1,i+1,\ldots,t+1\}$. 
Next, we add two new vertices, $u$ and $v$, and
add an edge of color $i$ joining each vertex of $\{u,v\}$ and each vertex of $H_i$ for $1\leq i\leq t+1$. We denote the resulting graph as $G_{t+1}$ and its edge coloring as $\phi_{t+1}$. With this construction, every component of the monochromatic subgraph induced by the edges of each color in $(G_{t+1},\phi_{t+1})$ is a $K_{2,s}$ with $s\geq 2$. If $(G_{t+1},\phi_{t+1})$ admits an adapted coloring with the color set $[t+1]$, then $H_i$ must contain a vertex colored $i$, and no color from $[t+1]$ is available at either $u$ or $v$. Consequently, $(G_{t+1},\phi_{t+1})$ does not admit an adapted coloring.

Now, we determine the maximum degree of each monochromatic subgraph in $G_t$. We define $V_t$ as the number of vertices in $G_t$ and $\Delta_t$ as the maximum degree in all monochromatic subgraphs of $G_t$. For $G_1$, we have $V_1 = 4$ and $\Delta_1 = 2$. Moving on to the recursive case for $t\geq 2$:
$$V_t=tV_{t-1}+2, \quad \Delta_t=V_{t-1}.$$
Solving the recurrence, we have that 
\begin{eqnarray*}
    V_t &=& V_1\cdot t!+2\cdot\frac{t!}{2!}+2\cdot\frac{t!}{3!}+\cdots+2\cdot\frac{t!}{(t-2)!}+2\cdot\frac{t!}{(t-1)!}+2\\
    &=& 2\cdot t!\cdot\bigg(1+1+\frac{1}{2!}+\frac{1}{3!}+\cdots+\frac{1}{(t-2)!}+\frac{1}{(t-1)!}+\frac{1}{t!}\bigg)\\
    &=& \big(2e+o(1)\big)\cdot t!.
\end{eqnarray*}
Moreover, $\Delta_t=V_{t-1}=\big(2e+o(1)\big)(t-1)!$.

Now, consider a value $d$, and choose $t$ so that  $\Delta_t \leq d < \Delta_{t+1}$. We construct $(G_t, \phi_t)$ as above, and let $G^i_t$ be the monochromatic subgraph of $(G_t, \phi_t)$ with color $i$ for $1 \leq i \leq t$. Note that each component of $G^i_t$ can be regarded as a generalized theta graph with the maximum degree at most $d$. Since $(G_t, \phi_t)$ does not admit an adapted coloring, the graph family ${G^1_t, G^2_t, \ldots, G^t_t}$ does not admit a cooperative coloring.

To complete the proof, it suffices to show that $d \leq (2e+o(1))t!$ implies $t \geq (1+o(1))\frac{\log d}{\log \log d}$.
Let 
$$d=(2e+o(1))t!=(2e+o(1))\sqrt{2\pi t}\bigg(\frac{t}{e}\bigg)^t,$$
and we have 
$$
    \log d = \log (2e+o(1)) +\frac{1}{2}\log 2\pi t + t\log t-t = t(\log t -1 + o(1)),
$$
$$
    \log\log d = \log t +\log\big(\log t -1 + o(1)\big).
$$
There follows
$$
    t\cdot\frac{\log\log d}{\log d} = \frac{\log t +\log\big(\log t -1 + o(1)\big)}{\log t -1 + o(1)} = 1 + o(1),
$$
as required.
\end{proof}



\section{Remark}
Let $\mathcal{W}^*$ be the class of graphs whose components are wheel graphs ($C_m\vee K_1$) or fan graphs ($P_n\vee K_1$), where $m\geq3$ and $n\geq 1$. A \emph{caterpillar} is defined as a tree in which, upon removing all the pendant vertices, it results in a path. 
A \emph{ring star} is a graph
that can be decomposed into a cycle (or ring) and a set of vertices each of them not belonging to the cycle but connected to it through a single edge.
Let $\mathcal{M}$ be the class of caterpillar graphs, and $\mathcal{R}$ be the class of ring star graphs. 
Observe that each graph in $\mathcal{W}^*$, $\mathcal{M}$ and $\mathcal{R}$ satisfies the property that their high-degree vertices induce a low-degree subgraph. 
In particular, for each graph in $\mathcal{W}^*$, $\mathcal{M}$ and $\mathcal{R}$, there is no ternary tree of height 2 as a subgraph. Let $\mathcal{H}\in \{\mathcal{W}^*,\mathcal{M},\mathcal{R}\}$. Then, Lemma \ref{lem2} implies that $m_{\mathcal{H}}(d)$ is at most $(1+o(1))\frac{\log d}{\log\log d}$. 
Furthermore, $m_{\mathcal{H}}(d) \geq m_\mathcal{S}(d)$ because any star forest $F$ of $\mathcal{S}$ is a subgraph of $\mathcal{H}$ of maximum degree $\Delta(F)$ for $d\geq 2$. Therefore, it follows from Theorem \ref{star} that $m_{\mathcal{H}}(d)\geq (1+o(1))\frac{\log d}{\log\log d}$. Hence, we obtain the following corollary.

\begin{cor}
    For $d \geq 2$, $m_{\mathcal{H}}(d)=(1+o(1))\frac{\log d}{\log \log d}$, where $\mathcal{H}\in \{\mathcal{W}^*,\mathcal{M},\mathcal{R}\}$.
\end{cor}
Since determining the value of $m(d)$ can be quite challenging even when $d$ is small, it is highly meaningful to further investigate cooperative colorings of special classes of graphs for small values of $d$. Therefore, we propose the following problem.
\begin{problem}
Determine the precise values of $m_\mathcal{G}(3)$ and $m_\mathcal{T}(4)$ where $\mathcal{G}$ represents the class of bipartite graphs and $\mathcal{T}$ represents the class of trees. 
\end{problem}

\section*{Acknowledgements}  
We are very grateful to the reviewers for their valuable feedback, which has significantly improved the quality of the paper.

\end{document}